\let\origsection=\section \def\section{\@ifstar{\origsection*}{\mysection}} 
\def\mysection{\@startsection{section}{1}\z@{.7\linespacing\@plus\linespacing}{.5\linespacing}{\normalfont\scshape\centering\S}}
\numberwithin{equation}{section}
\numberwithin{figure}{section}
\setlist[enumerate]{label=(\arabic*), ref=(\arabic*)}
\newtheorem*{ubqconjecture}{The Ubiquity Conjecture}
\newtheorem{theorem}{Theorem}[section]
\newtheorem{lemma}[theorem]{Lemma}
\newtheorem{corollary}[theorem]{Corollary}
\theoremstyle{definition}
\newtheorem{definition}[theorem]{Definition}
\newcommand{\N}{\mathbb{N}}
\newcommand{\Nbb}{\mathbb{N}}
\newcommand{\Pcal}{{\mathcal P}}
\newcommand{\Rcal}{{\mathcal R}}
\newcommand{\Scal}{{\mathcal S}}
\newcommand{\Tcal}{{\mathcal T}}
\newcommand{\Fcal}{{\mathcal F}}
\renewcommand{\triangleleft}{\vartriangleleft}
\renewcommand{\leq}{\leqslant}
\renewcommand{\preceq}{\preccurlyeq}
\renewcommand{\rho}{\varrho}
\newcommand{\nottriangleleft}{\not\kern-1pt\mathrel{\triangleleft}}
\DeclareMathOperator{\cf}{cf}
\newcommand{\Gtribe}{{$G$-\text{tribe}}}
\newcommand{\Gsubtribe}{{$G$-\text{subtribe}}}
\author[Bowler, Elbracht, Erde, Gollin, Heuer, Pitz, Teegen]{Nathan Bowler \and Christian Elbracht \and Joshua Erde \and Pascal Gollin \and Karl Heuer \and Max Pitz \and Maximilian Teegen}
\title[Topological ubiquity of trees]{Ubiquity in graphs I: Topological ubiquity of trees}
\keywords{Ubiquity conjecture; well-quasi-order; self-minors; Shelah singular compactness; ends of infinite graphs; linkages of rays; $G$-tribes}
\subjclass[2010]{05C63, 05C83, 03E05}  
\begin{document}
\begin{abstract}
    Let $\triangleleft$ be a relation between graphs. We say a graph $G$ is \emph{$\triangleleft$-ubiquitous} if whenever $\Gamma$ is a graph with $nG \triangleleft \Gamma$ for all $n \in \mathbb{N}$, then one also has $\aleph_0 G \triangleleft \Gamma$, where $\alpha G$ is the disjoint union of $\alpha$ many copies of $G$.

    The \emph{Ubiquity Conjecture} of Andreae, a well-known open problem in the theory of infinite graphs, asserts that every locally finite connected graph is ubiquitous with respect to the minor relation.

    In this paper, which is the first of a series of papers making progress towards the Ubiquity Conjecture, we show that all trees are ubiquitous with respect to the topological minor relation, irrespective of their cardinality. 
    This answers a question of Andreae from 1979.
\end{abstract}
\maketitle
\date{}

\section{Introduction}
Let $\triangleleft$ be a relation between graphs, for example the subgraph relation $\subseteq$, the topological minor relation $\leq$ or the minor relation $\preceq$. 
We say that a graph $G$ is \emph{$\triangleleft$-ubiquitous} if whenever $\Gamma$ is a graph with $nG \triangleleft \Gamma$ for all $n \in \mathbb{N}$, 
then one also has $\aleph_0 G \triangleleft \Gamma$, where $\alpha G$ is the disjoint union of $\alpha$ many copies of $G$. 

Two classic results of Halin \cite{H65,H70} say that both the ray and the double ray are $\subseteq$-ubiquitous, i.e.\ any graph which contains arbitrarily large collections of disjoint (double) rays must contain an infinite collection of disjoint (double) rays. 
However, even quite simple graphs can fail to be $\subseteq$ or $\leq$-ubiquitous, see e.g.\ \cite{A77,W76,L76}, examples of which, due to Andreae \cite{A13}, are depicted in Figures \ref{f:subgraph} and \ref{f:topsubgraph} below.

\begin{figure}[ht!]
    \center
    \begin{tikzpicture}[scale=1.3]
        \node[draw, circle,scale=.3, fill] () at (0,0) {};
        \node[draw, circle,scale=.3, fill] () at (0,1) {};
        \node[draw, circle,scale=.3, fill] () at (1,0) {};
        \node[draw, circle,scale=.3, fill] () at (1,1) {};
        \node[draw, circle,scale=.3, fill] () at (2,0) {};
        \node[draw, circle,scale=.3, fill] () at (2,1) {};
        \node[draw, circle,scale=.3, fill] () at (3,0) {};
        \node[draw, circle,scale=.3, fill] () at (3,1) {};
        \node[draw, circle,scale=.3, fill] () at (4,0) {};
        \node[draw, circle,scale=.3, fill] () at (4,1) {};
        \node[draw, circle,scale=.3, fill] () at (5,0) {};
        \node[draw, circle,scale=.3, fill] () at (5,1) {};
        \draw (0,0)--(5,0) (0,0)--(0,1) (1,0)--(1,1) (2,0)--(2,1) (3,0)--(3,1) (4,0)--(4,1) (5,0)--(5,1) (5,0)--(5.5,0);
        \node at (6,.5) {$\ldots$};
    \end{tikzpicture}
    \caption{A graph which is not $\subseteq$-ubiquitous.}
    \label{f:subgraph}
\end{figure}
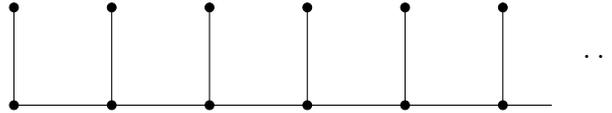

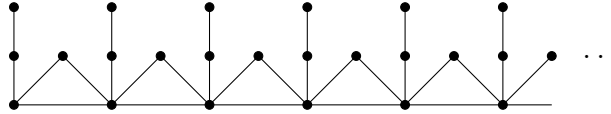
\begin{figure}[ht!]
    \center
    \begin{tikzpicture}[scale=1.3]
        \node[draw, circle,scale=.3, fill] () at (0,0) {};
        \node[draw, circle,scale=.3, fill] () at (0,1) {};
        \node[draw, circle,scale=.3, fill] () at (1,0) {};
        \node[draw, circle,scale=.3, fill] () at (1,1) {};
        \node[draw, circle,scale=.3, fill] () at (2,0) {};
        \node[draw, circle,scale=.3, fill] () at (2,1) {};
        \node[draw, circle,scale=.3, fill] () at (3,0) {};
        \node[draw, circle,scale=.3, fill] () at (3,1) {};
        \node[draw, circle,scale=.3, fill] () at (4,0) {};
        \node[draw, circle,scale=.3, fill] () at (4,1) {};
        \node[draw, circle,scale=.3, fill] () at (5,0) {};
        \node[draw, circle,scale=.3, fill] () at (5,1) {};
        \node[draw, circle,scale=.3, fill] () at (.5,.5) {};
        \node[draw, circle,scale=.3, fill] () at (1.5,.5) {};
        \node[draw, circle,scale=.3, fill] () at (2.5,.5) {};
        \node[draw, circle,scale=.3, fill] () at (3.5,.5) {};
        \node[draw, circle,scale=.3, fill] () at (4.5,.5) {};
        \node[draw, circle,scale=.3, fill] () at (5.5,.5) {};
        \node[draw, circle,scale=.3, fill] () at (0,.5) {};
        \node[draw, circle,scale=.3, fill] () at (1,.5) {};
        \node[draw, circle,scale=.3, fill] () at (2,.5) {};
        \node[draw, circle,scale=.3, fill] () at (3,.5) {};
        \node[draw, circle,scale=.3, fill] () at (4,.5) {};
        \node[draw, circle,scale=.3, fill] () at (5,.5) {};
        \draw (0,0)--(5,0) (0,0)--(0,1) (1,0)--(1,1) (2,0)--(2,1) (3,0)--(3,1) (4,0)--(4,1) (5,0)--(5,1) (5,0)--(5.5,0) (5,0)--(5.5,0.5)
(0,0)--(.5,.5)--(1,0) (1,0)--(1.5,.5)--(2,0)
(2,0)--(2.5,.5)--(3,0) (3,0)--(3.5,.5)--(4,0)
(4,0)--(4.5,.5)--(5,0)
;
        \node at (6,.5) {$\ldots$};
    \end{tikzpicture}
    \caption{A graph which is not $\leq$-ubiquitous.}
    \label{f:topsubgraph}
\end{figure}

However, for the minor relation, no such simple examples of non-ubiquitous graphs are known. Indeed, one of the most important problems in the theory of infinite graphs is the so-called \emph{Ubiquity Conjecture} due to Andreae \cite{A02}. 

\begin{ubqconjecture}
    Every locally finite connected graph is $\preceq$-ubiquitous.
\end{ubqconjecture}

In \cite{A02}, Andreae constructed a graph that is not $\preceq$-ubiquitous. However, this construction relys on the existence of a counterexample to the well-quasi-ordering of infinite graphs under the minor relation, for which counterexamples are only known with very large cardinality \cite{T88}. 
In particular, it is still an open question whether or not there exists a countable connected graph which is not $\preceq$-ubiquitous.

In his most recent paper on ubiquity to date, Andreae \cite{A13} exhibited infinite families of locally finite graphs for which the ubiquity conjecture holds. The present paper is the first in a series of papers \cite{BEEGHPTII,BEEGHPTIII,BEEGHPTIV} making further progress towards the ubiquity conjecture, with the aim being to show that all graphs of bounded tree-width are ubiquitous. 

As a first step towards this, we in particular need to deal with infinite trees, for which one even gets affirmative results regarding ubiquity under the topological minor relation. Halin showed in \cite{halin1975problem} that all trees of maximum degree $3$ are $\leq$-ubiquitous. Andreae improved this result to show that all \emph{locally finite} trees are $\leq$-ubiquitous \cite{A79}, and asked if his result could be extended to arbitrary trees \cite[p.~214]{A79}. 
Our main result of this paper answers this question in the affirmative.

\begin{theorem}\label{t:tree}
    Every tree is ubiquitous with respect to the topological minor relation.
\end{theorem}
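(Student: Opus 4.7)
The plan is to proceed by transfinite induction on $\lvert T \rvert$. Andreae's theorem already handles the locally finite case, so the first genuinely new difficulty appears for countable trees with vertices of infinite degree, and the uncountable case is then reduced to smaller cardinalities.

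For the countable case I would root $T$ at some vertex $r$, endowing $T$ with its canonical tree-order and a well-defined set of children at every vertex. The hypothesis $nT \leq \Gamma$ for all $n$ is then packaged as a \Gtribe{} (with $G = T$): a sequence, indexed by $n \in \N$, of families of $n$ pairwise disjoint subdivisions of $T$ in $\Gamma$. The strategy is to peel off an infinite disjoint family of subdivisions one copy at a time: having chosen disjoint $T_0, \ldots, T_{k-1}$, pass to a \Gsubtribe{} whose members are essentially disjoint from $T_0 \cup \cdots \cup T_{k-1}$ and read off the next copy $T_k$ from it. The main obstacle is to guarantee the existence of such a refining \Gsubtribe{} when $T$ has vertices of infinite degree: at such a vertex $v$ any embedding of $T$ must branch simultaneously into infinitely many subtrees, so a single new copy of $T$ already demands infinitely many disjoint branches to be routed coherently in $\Gamma$. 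To handle this I would use a ray-linkage argument: the rays inside the subtrees of $T$ below $v$ force the existence of distinguished ends of $\Gamma$, and combining the hypothesis of arbitrarily many disjoint copies with Halin's theorem on ubiquity of rays yields infinitely many pairwise disjoint rays realising the required branching.

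For the uncountable case with $\lvert T \rvert = \kappa$, I would split on the cofinality of $\kappa$. If $\kappa$ is singular, Shelah's singular compactness applies: write $T$ as a continuous increasing union $T = \bigcup_{\alpha < \cf(\kappa)} T_\alpha$ of subtrees of smaller cardinality, apply the inductive hypothesis to each $T_\alpha$, and amalgamate the resulting disjoint families. If $\kappa$ is a regular successor cardinal, singular compactness is unavailable and one must argue directly inside the \Gtribe{} formalism, carefully tracking how the cardinal structure of $T$ interacts with the chosen subdivisions.

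The genuinely hard step is the countable case at an infinite-degree vertex, where Andreae's locally finite method breaks down: the extraction of infinitely many disjoint branches through a single vertex of infinite degree is no longer a finite pigeonhole on an edge set but a compactness problem at the level of ends of $\Gamma$. Everything else is either formal \Gtribe{} bookkeeping or a standard invocation of a set-theoretic compactness theorem.
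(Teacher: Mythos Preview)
Your plan has a genuine structural gap in the countable case. The ``peel off one copy at a time'' strategy cannot work as stated: once you have fixed an infinite subdivision $T_0 \subseteq \Gamma$, there is no reason any thick subtribe should be essentially disjoint from it --- $T_0$ may meet every member of every layer. This is already an obstruction for locally finite trees, not something that first arises at vertices of infinite degree. The paper avoids this by never committing to a full copy of $T$ at any finite stage. Instead it builds all copies simultaneously as increasing unions of \emph{finite} approximations: at stage $n$ one has disjoint subdivisions $Q_1^n,\ldots,Q_s^n$ of a finite subtree $S_n^{\neg\epsilon}$, together with a finite \emph{bounder} $X_n$ separating them from the end $\epsilon$ at which the tribe is concentrated, and an \emph{extender} consisting of disjoint $\epsilon$-rays from the appropriate boundary vertices. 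New copies are born inside $C(X_n,\epsilon)$, and existing copies are extended by linking their extender rays to fresh tribe members there.

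The machinery you invoke is also not strong enough. Halin's ray theorem does not supply the self-similarity the construction needs; the paper uses Laver's labelled WQO for rooted trees to show that along any upward ray $R$ in $T$ there are infinitely many vertices $r_k$ with $T_{r_0} \leq_r T_{r_k}$ sending $R$ into $R$ (Corollary~\ref{lem_pushalong}), and Nash-Williams' WQO to control the children at infinite-degree vertices (Corollaries~\ref{lem_rotatearoundKappa} and~\ref{lem_rotatearound}). These are what allow the finite pieces $S_n$ to be chosen so that the construction can always be continued. For the uncountable step, your singular/regular split is correct in spirit, but the amalgamation only works because the countable construction produces an $S$-\emph{horde}: each $H_i$ is required to be $T$-\emph{suitable}, meaning $H_i(s)$ admits a subdivision of $T_s$ above it for every $s$. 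Without carrying this suitability along, there is no mechanism to extend an $S'$-horde to an $S$-horde at successor stages of the transfinite recursion.
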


The proof will use some results about the well-quasi-ordering of trees under the topological minor relation of Nash-Williams \cite{nash1965well} and Laver \cite{laver1978better}, as well as some notions about the topological structure of infinite graphs \cite{Ends}. Interestingly, most of the work in proving Theorem \ref{t:tree} lies in dealing with the countable case, where several new ideas are needed. In fact, we will prove a slightly stronger statement in the countable case, which will allow us to derive the general result via transfinite induction on the cardinality of the tree, using some ideas from Shelah's singular compactness theorem \cite{shelah1975compactness}.

To explain our strategy, let us fix some notation. 
When $H$ is a subdivision of $G$ we write $G \leq^{*} H$. Then, $G \leq \Gamma$ means that there is a subgraph $H \subseteq \Gamma$ which is a subdivision of $G$, that is, $G \leq^* H$. If $H$ is a subdivision of $G$ and $v$ a vertex of $G$, then we denote by $H(v)$ the corresponding vertex in $H$. More generally, given a subgraph $G' \subseteq G$, we denote by $H(G')$ the corresponding subdivision of $G'$ in $H$.

Now, suppose we have a rooted tree $T$ and a graph $\Gamma$. 
Given a vertex $t \in T$, let $T_t$ denote the subtree of $T$ rooted in $t$.
We say that a vertex $v \in \Gamma$ is {\em $t$-suitable} if there is some subdivision $H$ of $T_t$ in $\Gamma$ with $H(t) = v$. 
For a subtree $S \subseteq T$ we say that a subdivision $H$ of $S$ in $\Gamma$ is {\em $T$-suitable} if for each vertex $s\in V(S)$ the vertex $H(s)$ is $s$-suitable, 
i.e.\ for every $s \in V(S)$ there is a subdivision $H'$ of $T_s$ such that $H'(s) = H(s)$.

An {\em $S$-horde} is a sequence $(H_i \colon i \in \N)$ of disjoint 
suitable subdivisions of $S$ in $\Gamma$. 
If $S'$ is a subtree of $S$, then we say that an $S$-horde $(H_i \colon i \in \N)$ {\em extends} an $S'$-horde $(H'_i \colon i \in \N)$ if for every $i \in \mathbb{N}$ we have $H_i(S') = H_i'$.

In order to show that an arbitrary tree $T$ is $\leq$-ubiquitous, our rough strategy will be to build, by transfinite recursion, $S$-hordes for larger and larger subtrees $S$ of $T$, each extending all the previous ones, until we have built a $T$-horde. 
However, to start the induction it will be necessary to show that we can build $S$-hordes for countable subtrees $S$ of $T$. This will be done in the following key result of this paper:

\begin{theorem}\label{t:countembed}
    Let $T$ be a tree, $S$ a countable subtree of $T$ and $\Gamma$ a graph such that $nT \leq \Gamma$ for every $n \in \Nbb$. 
    Then there is an $S$-horde in $\Gamma$.
\end{theorem}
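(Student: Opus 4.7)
The plan is to enumerate $V(S) = \{s_0, s_1, \ldots\}$ with $s_0$ the root, fix an exhaustive chain $S_0 \subseteq S_1 \subseteq \cdots$ of finite subtrees of $S$ with $\bigcup_n S_n = S$ (for instance, $S_n$ the subtree of $S$ spanned by $\{s_0, \ldots, s_n\}$, which is finite since $S$ is a tree), and construct recursively in $n$ a $T$-suitable $S_n$-horde $(H_i^n : i \in \N)$ such that $(H_i^{n+1})$ extends $(H_i^n)$ for every $n$. Setting $H_i := \bigcup_n H_i^n$ then produces an $S$-horde: each $H_i$ is a subdivision of $S = \bigcup_n S_n$, the $H_i$ are pairwise disjoint because each level is, and $T$-suitability of $H_i$ follows since for every $s \in V(S)$ the witness at the finite level where $s$ first appears remains a witness for $H_i$.

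The base case $n = 0$ asks for infinitely many distinct $s_0$-suitable vertices. Since $mT \leq \Gamma$ for every $m$, taking $m$ pairwise disjoint subdivisions of $T$ in $\Gamma$ yields $m$ distinct $s_0$-suitable vertices, forcing the set of $s_0$-suitable vertices of $\Gamma$ to be infinite, and any enumeration of $\aleph_0$ of them supplies $(H_i^0)_{i \in \N}$. In the inductive step, the new piece $S_{n+1} \setminus S_n$ is a finite path $P = t_0 t_1 \cdots t_\ell$ (finite because $S_n$ is finite and distances in the tree $S$ are finite) with $t_0 \in S_n$ and $t_\ell = s_{n+1}$. For each $i$, the $t_0$-suitability of $H_i^n(t_0)$ furnishes a subdivision $K_i$ of $T_{t_0}$ based at $H_i^n(t_0)$; the segment of $K_i$ tracing $t_0 t_1 \cdots t_\ell$ supplies a candidate extension path $Q_i$ whose new vertices $K_i(t_j)$ are automatically $t_j$-suitable, since $K_i$ restricted to the subtree of $T_{t_0}$ rooted at $t_j$ witnesses it.

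The main obstacle, and where the new ideas of the countable case enter, is coordinating these extensions so that the $Q_i$ are pairwise disjoint and avoid the rest of the existing horde. Because $\bigcup_i H_i^n$ already occupies countably many vertices of $\Gamma$, no single finite invocation of $mT \leq \Gamma$ reserves enough untouched vertex space, and the witnesses $K_i$ may well collide with the horde and with each other. My plan is to prove the stronger inductive statement that from any $T$-suitable $S_n$-horde indexed by $\N$ one can always extract an infinite sub-horde which does extend to a $T$-suitable $S_{n+1}$-horde: via an infinitary pigeonhole argument --- in which the Nash-Williams/Laver well-quasi-ordering of trees under the topological minor relation plays a key role by organising the possible ``types'' of witnesses $K_i$ (and their collision patterns with the existing horde) into a well-founded hierarchy, so that one may always pass to an infinite subfamily with controlled interaction --- one thins to a subsequence whose $K_i$ pairwise meet only at controllable vertices, and then greedily selects pairwise disjoint paths $Q_i$ avoiding the prescribed vertex set. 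Re-indexing the surviving sub-horde as $(H_i^{n+1})_{i \in \N}$ then completes the induction.
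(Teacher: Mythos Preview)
Your approach has a genuine gap at the point where you pass to an infinite sub-horde at each step and then ``re-index''. If at stage $n$ you only retain an infinite subset $I_n \subseteq I_{n-1}$ of indices, then after countably many stages the intersection $\bigcap_n I_n$ may well be empty (take e.g.\ $I_n = \{n, n+1, \ldots\}$), so the limiting objects $H_i = \bigcup_n H_i^n$ need not exist for any $i$. Re-indexing does not help: if $H_i^{n+1}$ extends $H_{f_n(i)}^n$ for some injection $f_n$, you would need an infinite thread through the inverse system $\N \xleftarrow{f_0} \N \xleftarrow{f_1} \cdots$, and such threads need not exist. Your own opening plan explicitly requires ``$(H_i^{n+1})$ extends $(H_i^n)$'' with matching indices; the later thinning step contradicts this.

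More fundamentally, the ``stronger inductive statement'' you propose --- that any suitable $S_n$-horde admits an infinite sub-horde extending to $S_{n+1}$ --- is asserted but not proved, and the sketch (pigeonhole on collision types via well-quasi-ordering, then greedy selection) does not go through: the witnessing subdivisions $K_i$ may all be forced through a common bottleneck, and suitability alone gives no control over where they live. The paper's proof avoids both problems by never thinning the horde once built. Instead it first concentrates a thick $T$-tribe at a single end $\epsilon$ of $\Gamma$, and then carries along with each partial horde an \emph{extender}: a finite family of pairwise disjoint rays from the current boundary vertices to $\epsilon$. These rays reserve room for future growth, and the weak and strong linking lemmas allow them to be re-routed onto fresh $T$-copies drawn from the tribe at each step. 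The Nash-Williams/Laver theorem is used not to classify collision patterns but to find self-similar rays in $T$ along which such rerouting is always possible, and to cut $S$ into finite pieces along such rays.
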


Note that Theorem \ref{t:countembed} in particular implies $\leq$-ubiquity of countable trees. 

We remark that whilst the relation $\preceq$ is a relaxation of the relation $\leq$, which is itself a relaxation of the relation $\subseteq$, it is not clear whether $\subseteq$-ubiquity implies $\leq$-ubiquity, or whether $\leq$-ubiquity implies $\preceq$-ubiquity. 
In the case of Theorem \ref{t:tree} however, it is true that arbitrary trees are also $\preceq$-ubiquitous, although the proof involves some extra technical difficulties that we will deal with in a later paper \cite{BEEGHPTIV}. 
We note, however, that it is surprisingly easy to show that countable trees are $\preceq$-ubiquitous, since it can be derived relatively straightforwardly from Halin's grid theorem, see \cite[Theorem 1.7]{BEEGHPTII}. 

This paper is structured as follows: In Section~\ref{sec2}, we provide background on rooted trees, rooted topological embeddings of rooted trees (in the sense of Kruskal and Nash-Williams), and ends of graphs. 
In our graph theoretic notation we generally follow the textbook of Diestel \cite{D16}. 
Next, Sections~\ref{sec3} to \ref{secGtribes} introduce the key ingredients for our main ubiquity result. 
Section~\ref{sec3}, extending ideas from Andreae's \cite{A79}, lists three useful corollaries of Nash-Williams' and Laver's result that (labelled) trees are well-quasi-ordered under the topological minor relation, 
Section~\ref{sec4} investigates under which conditions a given family of disjoint rays can be rerouted onto another family of disjoint  rays, 
and Section~\ref{secGtribes} shows that without loss of generality, we already have quite a lot of information about how exactly our copies of $nG$ are placed in the host graph $\Gamma$.

Using these ingredients, we give a proof of the countable case, i.e.\ of Theorem~\ref{t:countembed}, in Section~\ref{sec:countable-subtrees}. 
Finally, Section~\ref{sec:Induction} contains the induction argument establishing our main result, Theorem~\ref{t:tree}.

\section{Preliminaries}
\label{sec2}

\begin{definition}
    A \emph{rooted graph} is a pair $(G,v)$ where $G$ is a graph and $v \in V(G)$ is a vertex of $G$ which we call the \emph{root}. 
    Often, when it is clear from the context which vertex is the root of the graph, we will refer to a rooted graph $(G,v)$ as simply $G$.

    Given a rooted tree $(T,v)$, we define a partial order $\le$, which we call the \emph{tree-order}, on $V(T)$ by letting $x \le y$ if the unique path between $y$ and $v$ in $T$ passes through $x$. See \cite[Section~1.5]{D16} for more background. 
    For any edge $e \in E(T)$ we denote by $e^-$ the endpoint closer to the root and by $e^+$ the endpoint further from the root. 
    For any vertex $t$ we denote by $N^+(t)$ the set of \emph{children of $t$} in $T$, the neighbours $s$ of $t$ satisfying $t\le s$. 
    The subtree of $T$ rooted at $t$ is denoted by $(T_t,t)$, that is, the induced subgraph of $T$ on the set of vertices $\{ s \in V(T) \colon t \le s\}$. 

    We say that a rooted tree $(S,w)$ is a \emph{rooted subtree} of a rooted tree $(T,v)$ if $S$ is a subgraph of $T$ such that the tree order on $(S,w)$ agrees with the induced tree order from $(T,v)$. 
In this case we write $(S,w) \subseteq_r (T,v)$.

    We say that a rooted tree $(S,w)$ is a \emph{rooted topological minor} of a rooted tree $(T,v)$ if there is a subgraph $S'$ of $T$ which is a subdivision of $S$ such that for any $x \le y \in V(S)$, $S'(x) \le S'(y)$ in the tree-order on $T$. 
    We call such an $S'$ a \emph{rooted subdivision of $S$}. 
    In this case we write $(S,w) \leq_r (T,v)$, cf.~\cite[Section~12.2]{D16}.
\end{definition}

\begin{definition}[{Ends of a graph, cf.~\cite[Chapter~8]{D16}}]
    An \emph{end} in an infinite graph $\Gamma$ is an equivalence class of rays, where two rays $R$ and $S$ are equivalent if and only if there are infinitely many vertex disjoint paths between $R$ and $S$ in $\Gamma$. We denote by $\Omega(\Gamma)$ the set of ends in $\Gamma$. 
    Given any end $\epsilon \in \Omega(\Gamma)$ and a finite set $X \subseteq V(\Gamma)$ there is a unique component of $\Gamma - X$ which contains a tail of every ray in $\epsilon$, which we denote by $C(X,\epsilon)$.

    A vertex $v \in V(\Gamma)$ \emph{dominates} an end $\omega$ if there is a ray $R \in \omega$ such that there are infinitely many vertex disjoint $v$\,--\,$R$\,-paths in $\Gamma$. 
\end{definition}

\begin{definition}
    For a path or ray $P$ and vertices $v,w \in V(P)$, let $vPw$ denote the subpath of $P$ with endvertices $v$ and $w$. If $P$ is a ray, let $Pv$ denote the finite subpath of $P$ between the initial vertex of $P$ and $v$, and let $vP$ denote the subray (or \emph{tail}) of $P$ with initial vertex $v$.
    
    Given two paths or rays $P$ and $Q$ which are disjoint but for one of their endvertices, we write $PQ$ for the \emph{concatenation of $P$ and $Q$}, that is the path, ray or double ray $P \cup Q$. Since concatenation of paths is associative, we will not use parentheses. Moreover, if we concatenate paths of the form $vPw$ and $wQx$, then we omit writing $w$ twice and denote the concatenation by $vPwQx$.
\end{definition}

\section{\texorpdfstring{Well-quasi-orders and $\kappa$-embeddability}%
{Well-quasi-orders and \textkappa-embeddability}}
\label{sec3}

\begin{definition}
    Let $X$ be a set and let $\triangleleft$ be a binary relation on $X$. 
    Given an infinite cardinal $\kappa$ we say that an element $x \in X$ is \emph{$\kappa$-embeddable \textup{(}with respect to $\triangleleft$\textup{)} in $X$} if there are at least $\kappa$ many elements $x' \in X$ such that $x \triangleleft x'$.
\end{definition}

\begin{definition}[well-quasi-order]
    A binary relation $\triangleleft$ on a set $X$ is a \emph{well-quasi-order} if it is reflexive and transitive, and for every sequence $x_1,x_2,\ldots \in X$ there is some $i < j$ such that $x_i \triangleleft x_j$. 
\end{definition}

\begin{lemma}
    \label{lem_rotateKappaCollections}
    Let $X$ be a set and let $\triangleleft$ be a well-quasi-order on $X$. 
For any infinite cardinal $\kappa$ the number of elements of $X$ which are not $\kappa$-embeddable with respect to $\triangleleft$ in $X$ is less than $\kappa$.
\end{lemma}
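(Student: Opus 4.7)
The plan is to argue by contradiction, constructing an infinite bad sequence that directly witnesses the failure of the wqo property of $\triangleleft$. Concretely, I would let
\[ Y := \{x \in X : x \text{ is not $\kappa$-embeddable in $X$}\}, \]
assume for contradiction that $|Y| \ge \kappa$, and produce an infinite sequence $y_1, y_2, \ldots$ of elements of $Y$ with $y_i \nottriangleleft y_j$ for all $i < j$, contradicting the definition of well-quasi-order.

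For each $y \in Y$ let $U_y := \{x' \in X : y \triangleleft x'\}$, so that by assumption $|U_y| < \kappa$. I would build the bad sequence recursively, maintaining the invariant that after $n$ steps the chosen elements $y_1, \ldots, y_n$ pairwise satisfy $y_i \nottriangleleft y_j$ whenever $i < j \le n$. At stage $n+1$, the only ``forbidden'' elements are those in $\bigcup_{i \le n} U_{y_i}$, because choosing $y_{n+1}$ outside this union guarantees that no $y_i$ with $i \le n$ precedes it in the relation. This union is a finite union of sets each of cardinality $< \kappa$, and hence is itself of cardinality $< \kappa$ since $\kappa$ is infinite. Because $|Y| \ge \kappa$, the set $Y \setminus \bigcup_{i \le n} U_{y_i}$ is nonempty (in fact still of size $\ge \kappa$), so some admissible $y_{n+1}$ exists, and the recursion proceeds through all $n \in \N$.

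There is essentially no obstacle to this argument; the key ``trick'' is simply to read the definition of $\kappa$-embeddability correctly and exploit the fact that a bad sequence needs only countably many steps, while each step forbids strictly fewer than $\kappa$ further choices. The one place meriting a brief justification is the cardinal-arithmetic remark that a finite union of sets of size $< \kappa$ remains of size $< \kappa$ because $\kappa$ is infinite; everything else is immediate from the construction.
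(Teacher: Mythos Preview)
Your proposal is correct and follows essentially the same approach as the paper's own proof: both argue by contradiction, set up the sets $U_y = \{x' : y \triangleleft x'\}$, and recursively build an infinite bad sequence by choosing each new term outside the finite union of the preceding $U_{y_i}$, using that this union has size $< \kappa$ while the set of non-$\kappa$-embeddable elements is assumed to have size $\geq \kappa$.
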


\begin{proof}
    For $x \in X$ let $U_x= \{ y \in X \colon x \triangleleft y\}$. 
    Now suppose for a contradiction that the set $A \subseteq X$ of elements which are not $\kappa$-embeddable with respect to $\triangleleft$ in $X$ has size at least $\kappa$. 
    Then, we can recursively pick a sequence $(x_n \in A)_{n \in \Nbb}$ such that $x_m \nottriangleleft x_n$ for $m < n$. 
    Indeed, having chosen all $x_m$ with $m < n$ it suffices to choose $x_n$ to be any element of the set $A \setminus \bigcup_{m < n} U_{x_m}$, which is nonempty since $A$ has size $\kappa$ but each $U_{x_m}$ has size $< \kappa$.

    By construction  we have $x_{m} \nottriangleleft x_n$ for $m<n$, contradicting the assumption that $\triangleleft$ is a well-quasi-order on $X$.
\end{proof}

We will use the following theorem of Nash-Williams on well-quasi-ordering of rooted trees, and its extension by Laver to labelled rooted trees.

\begin{theorem}[Nash-Williams \cite{nash1965well}]\label{t:NW}
    The relation $\leq_r$ is a well-quasi order on the set of rooted trees.
\end{theorem}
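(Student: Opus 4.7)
The plan is to use the classical \emph{minimal bad sequence} argument of Nash-Williams. Suppose for a contradiction that $\leq_r$ is not a well-quasi-order on rooted trees. Since $\leq_r$ is evidently reflexive and transitive, there must exist a \emph{bad sequence}, i.e.\ an infinite sequence $(T_n)_{n \in \Nbb}$ of rooted trees such that $T_i \nottriangleleft_r T_j$ whenever $i < j$ (here I use $\nottriangleleft_r$ informally to mean the negation of $\leq_r$). From all bad sequences I construct a \emph{minimal} one recursively: having chosen $T_0, \ldots, T_{n-1}$, pick $T_n$ so that it has the fewest possible vertices among all rooted trees which extend $T_0, \ldots, T_{n-1}$ to some bad sequence. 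For finite rooted trees the vertex count gives a well-founded measure that makes such a choice possible; for the general case one has to replace this by an appropriate better-quasi-order argument, which is essentially the content of Laver's extension.

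Next, for each $n$ let $B_n$ be the set of rooted subtrees $(T_n)_c$ where $c$ ranges over the children of the root of $T_n$, and set $B = \bigcup_{n \in \Nbb} B_n$. The central claim is that $(B, \leq_r)$ is itself a well-quasi-order. Suppose, for contradiction, that $(S_k)_{k \in \Nbb}$ is a bad sequence in $B$, and let $m$ be minimal with $S_0 \in B_m$. I then verify that the hybrid sequence
\[
T_0, T_1, \ldots, T_{m-1}, S_0, S_1, S_2, \ldots
\]
is bad: the $T$-part is bad by assumption, the $S$-part is bad by choice, and any $T_i \leq_r S_k$ with $i < m$ would combine with the obvious relation $S_k \leq_r T_{n_k}$ (for the index $n_k \geq m$ with $S_k \in B_{n_k}$) to yield $T_i \leq_r T_{n_k}$ by transitivity, contradicting badness of $(T_n)$. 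However, $S_0$ has strictly fewer vertices than $T_m$, contradicting the minimality in the construction of $(T_n)$.

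With $(B, \leq_r)$ a well-quasi-order, I now apply Higman's lemma: the set of finite sequences over $B$, with subsequence embedding respecting $\leq_r$, is again well-quasi-ordered. To each $T_n$ associate the (finite or possibly infinite, depending on the setting) sequence of its root-children subtrees in $B_n$. Higman's lemma produces $i < j$ and an injection of the child-subtrees of $T_i$ into those of $T_j$ that pairs each child-subtree of $T_i$ with a $\leq_r$-larger child-subtree of $T_j$. Mapping the root of $T_i$ to the root of $T_j$ and combining the rooted subdivisions witnessing the child-level embeddings yields a rooted subdivision of $T_i$ inside $T_j$, that is $T_i \leq_r T_j$, contradicting the badness of $(T_n)$.

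The main obstacle is the minimal bad sequence step: one needs a well-founded notion of ``size'' on the class of rooted trees under consideration in order to perform the minimisation and then carry out the splicing argument. For finite rooted trees the number of vertices suffices; for arbitrary (possibly infinite) rooted trees one must replace Higman's lemma by its bqo strengthening and choose the minimising measure inside the richer framework of better-quasi-orders, which is why the genuinely infinite case is attributed to Laver rather than to Nash-Williams's original 1965 argument.
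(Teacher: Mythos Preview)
The paper does not give its own proof of this theorem; it is simply quoted as a classical result and attributed to Nash-Williams's 1965 paper. So there is nothing in the paper to compare your argument against directly.

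Your sketch is the standard minimal bad sequence proof of Kruskal's theorem for \emph{finite} rooted trees, and for that restricted class it is essentially correct. However, Theorem~\ref{t:NW} as stated in the paper is about \emph{all} rooted trees, including infinite ones, and here your argument has a genuine gap that you yourself flag but do not close. The minimisation step requires a well-founded notion of size on the class of trees in question, and vertex count fails for infinite trees. You defer this to ``the richer framework of better-quasi-orders'' and attribute the infinite case to Laver, but this is historically inaccurate: the cited 1965 paper of Nash-Williams is precisely the one that establishes the infinite-tree case, and it does so by an argument considerably more delicate than the finite minimal bad sequence method (Nash-Williams introduced what later crystallised into the theory of better-quasi-orders in that very work). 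Laver's contribution, quoted separately as Theorem~\ref{t:Laver}, is the \emph{labelled} extension.

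There is a second issue in your Higman step: if some $T_n$ has a root of infinite degree, the family $B_n$ of child-subtrees is infinite, and Higman's lemma on finite sequences does not apply. One needs the transfinite-sequence version, which again lives in the bqo world. So as written, your proposal is a correct outline for Kruskal's theorem but not a proof of the statement actually being cited.
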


\begin{theorem}[Laver \cite{laver1978better}]\label{t:Laver}
    The relation $\leq_r$ is a well-quasi order on the set of rooted trees with finitely many labels, 
    i.e.\ for every finite number $k \in \N$, whenever $(T_1,c_1),(T_2,c_2),\ldots$ is a sequence of rooted trees with $k$-colourings $c_i \colon T_i \to [k]$, there is some $i < j$ such that there exists a subdivision $H$ of $T_i$ with $H \subseteq_r T_j$ and $c_i(t) = c_j(H(t))$ for all $t \in T_i$.\footnote{In fact, Laver showed that rooted trees labelled by a \emph{better-quasi-order} are again better-quasi-ordered under $\leq_r$ respecting the labelling, but we shall not need this stronger result.}
\end{theorem}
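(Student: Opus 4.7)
The plan is to reduce Theorem~\ref{t:Laver} to the \emph{better-quasi-order} (BQO) strengthening of Nash-Williams' tree theorem, since the extra structural strength of being BQO is precisely what allows one to carry labels; ordinary WQO is not closed under the operations we need. Concretely, I would prove (or quote) the general statement that if $Q$ is any BQO, then the class of $Q$-labelled rooted trees, ordered by label-respecting $\leq_r$, is also a BQO. The stated theorem then follows, since $([k],=)$ is trivially BQO (any finite antichain is BQO) and any BQO is \emph{a fortiori} a WQO.

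The main work is the BQO closure under the tree construction. I would adapt the minimal-bad-array argument underlying Theorem~\ref{t:NW}. Suppose some super-sequence $f$ of $[k]$-labelled rooted trees, indexed by a Nash-Williams barrier, is bad. Choose $f$ minimal among bad super-sequences in a suitable well-founded sense: along the barrier, make $|V(f(s))|$ smallest in the order induced by the shift operations on barriers. For each $T_s = f(s)$, decompose it at its root into the indexed family of principal subtrees $(T_{s,v} : v \in N^+(r_s))$ together with the root label $c_s(r_s)$. By minimality, the collection of all principal subtrees arising in this way is itself BQO under label-respecting $\leq_r$; this is where the minimality of $f$ bites.

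Now apply two basic BQO closure theorems: first, if $Q$ is BQO then the class of sequences from $Q$ under the Higman-style embedding is BQO; second, BQOs are closed under finite products. Hence the pairs (root label, principal-subtree-family) form a BQO, so some pair of indices $s, t$ in the barrier yields a good pair --- the root labels agree and the principal-subtree-family of $T_s$ embeds label-respectingly into that of $T_t$. Reassembling the root with these embedded subtrees produces a label-respecting $\leq_r$ embedding of $T_s$ into $T_t$, contradicting the badness of $f$.

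The main obstacle is the BQO machinery itself: setting up barriers and super-sequences, making the ``minimality bites'' step rigorous (the most delicate part of the Nash-Williams argument), and proving closure of BQO under infinite sequences of arbitrary ordinal length. Each of these is standard once BQO is set up, but none is short; in practice one simply cites \cite{laver1978better}, and the sketch above is meant only to indicate why the finite-label case is no harder than the unlabelled case already handled by Nash-Williams.
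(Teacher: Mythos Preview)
The paper does not give its own proof of Theorem~\ref{t:Laver}; it simply quotes the result from Laver~\cite{laver1978better} (and notes in the footnote that Laver actually proves the stronger BQO statement). So there is nothing in the paper to compare your proposal against.

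That said, your sketch is an accurate outline of Laver's argument: one works in the BQO category rather than WQO, runs a minimal-bad-array argument on barriers, decomposes each tree at the root into its label plus its multiset of principal subtrees, and invokes BQO closure under products and under transfinite sequences to derive a contradiction. Your caveat that the delicate points are the setup of barriers/super-sequences and the ``minimality bites'' step is well taken; these are exactly the places where the BQO proof requires real care, and why the paper (and you, in your last sentence) simply cite~\cite{laver1978better} rather than reproducing it.
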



Together with Lemma~\ref{lem_rotateKappaCollections} these results give us the following three corollaries:

\begin{definition}
    \label{def_kappachildren}
    Let $(T,v)$ be an infinite rooted tree. 
    For any vertex $t$ of $T$ and any infinite cardinal $\kappa$, we say that a child $t'$ of $t$ is $\kappa$-embeddable if there are at least $\kappa$ children $t''$ of $t$ such that $T_{t'}$ is a rooted topological minor of $T_{t''}$.
\end{definition}

\begin{corollary}
    \label{lem_rotatearoundKappa}
    Let $(T,v)$ be an infinite rooted tree, $t \in V(T)$ and $\mathcal{T} = \{T_{t'} \colon t' \in N^+(t)\}$. 
    Then for any infinite cardinal $\kappa$, the number of children of $t$ which are not $\kappa$-embeddable is less than $\kappa$. 
\end{corollary}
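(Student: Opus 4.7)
The statement is a direct consequence of Lemma~\ref{lem_rotateKappaCollections} combined with the Nash-Williams theorem (Theorem~\ref{t:NW}), provided one sets up the right quasi-order. The small subtlety is that Definition~\ref{def_kappachildren} counts \emph{children} of $t$ rather than distinct isomorphism types of subtrees, so the quasi-order should live on $N^+(t)$ rather than on the set $\mathcal{T}$.

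The plan is as follows. First I would define a binary relation $\triangleleft$ on the set $X := N^+(t)$ by declaring $t' \triangleleft t''$ if and only if $T_{t'} \leq_r T_{t''}$. The relation $\leq_r$ on rooted trees is reflexive (each $T_{t'}$ is trivially a rooted subdivision of itself) and transitive (concatenation of rooted subdivisions is a rooted subdivision), so $\triangleleft$ inherits both properties. For the well-quasi-order property, given any sequence $(t'_n)_{n \in \Nbb}$ of children of $t$, applying Theorem~\ref{t:NW} to the sequence of rooted trees $(T_{t'_n})_{n \in \Nbb}$ yields indices $i < j$ with $T_{t'_i} \leq_r T_{t'_j}$, i.e.\ $t'_i \triangleleft t'_j$. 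Hence $\triangleleft$ is a well-quasi-order on $X = N^+(t)$.

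Next I would observe that, by construction, a child $t' \in N^+(t)$ is $\kappa$-embeddable in the sense of Definition~\ref{def_kappachildren} precisely when it is $\kappa$-embeddable in $X$ with respect to $\triangleleft$ in the sense of Definition~3.1: both conditions require the existence of at least $\kappa$ children $t''$ of $t$ with $T_{t'} \leq_r T_{t''}$.

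Finally, applying Lemma~\ref{lem_rotateKappaCollections} to $(X,\triangleleft)$ and the cardinal $\kappa$ yields that the number of children of $t$ which are not $\kappa$-embeddable with respect to $\triangleleft$ is strictly less than $\kappa$, which is exactly the desired conclusion. There is no real obstacle here; the only thing one must take care of is to work with the indexed family of subtrees (via the set of children) rather than with the possibly smaller set $\mathcal{T}$ of distinct rooted subtrees.
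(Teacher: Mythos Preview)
Your proposal is correct and follows essentially the same route as the paper: invoke Nash--Williams to get a well-quasi-order and then apply Lemma~\ref{lem_rotateKappaCollections}. The paper applies the lemma directly to $(\mathcal{T},\leq_r)$ rather than to $(N^+(t),\triangleleft)$; the subtlety you raise does not in fact arise, since for distinct children $t'\neq t''$ the subtrees $T_{t'}$ and $T_{t''}$ have different vertex sets and are therefore distinct elements of $\mathcal{T}$, so the map $t'\mapsto T_{t'}$ is a bijection and the two formulations coincide.
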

\begin{proof}
    By Theorem \ref{t:NW} the set $\mathcal{T} = \{T_{t'} \colon t' \in N^+(t)\}$ is well-quasi-ordered by $\leq_r$ and so the claim follows by Lemma~\ref{lem_rotateKappaCollections} applied to $\mathcal{T}$, $\leq_r$, and $\kappa$.
\end{proof}

\begin{corollary}
    \label{lem_rotatearound}
    Let $(T,v)$ be an infinite rooted tree, $t \in V(T)$ a vertex of infinite degree and $(t_i\in N^{+}(t) \colon i \in \N)$ a sequence of countably many of its children. 
    Then there exists $N_t \in \mathbb{N}$ such that for all $n \geq N_t$,
    \[ \{t\} \cup \bigcup_{i > N_t} T_{t_i} \leq_r \{t\} \cup \bigcup_{i > n} T_{t_i}\]
    (considered as trees rooted at $t$) fixing the root $t$.
\end{corollary}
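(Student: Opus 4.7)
My plan is to reduce the rooted topological minor statement to a matching problem on the index set $\N$, and then solve it using the well-quasi-ordering of rooted trees.

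First, I would note that $\{t\} \cup \bigcup_{i > N_t} T_{t_i} \leq_r \{t\} \cup \bigcup_{j > n} T_{t_j}$ fixing $t$ will follow as soon as I produce an injection $\phi \colon \{i \in \N \colon i > N_t\} \to \{j \in \N \colon j > n\}$ with $T_{t_i} \leq_r T_{t_{\phi(i)}}$ as rooted trees (sending $t_i \mapsto t_{\phi(i)}$) for every $i > N_t$. Given such a $\phi$, I glue the chosen rooted subdivisions of each $T_{t_i}$ inside $T_{t_{\phi(i)}}$ along the common root $t$, mapping each edge $t t_i$ to the edge $t t_{\phi(i)}$; disjointness of the images follows from the disjointness of the subtrees $T_{t_{\phi(i)}}$ for distinct $\phi$-values, and the tree-order is automatically preserved.

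To locate $N_t$, I would apply Nash-Williams' theorem. Define a quasi-order $\preceq$ on $\N$ by $i \preceq j$ iff $T_{t_i} \leq_r T_{t_j}$; this is reflexive and transitive, and Theorem \ref{t:NW} makes it a well-quasi-order on $\N$. Applying Lemma \ref{lem_rotateKappaCollections} with $\kappa = \aleph_0$, only finitely many $i \in \N$ fail to be $\aleph_0$-embeddable, i.e.\ only finitely many satisfy $|\{j \in \N \colon T_{t_i} \leq_r T_{t_j}\}| < \aleph_0$. I take $N_t$ strictly larger than every such exceptional index. Then, for any fixed $n \geq N_t$ and each $i > N_t$, the set $J_i = \{j > n \colon T_{t_i} \leq_r T_{t_j}\}$ differs from the infinite set $\{j \in \N \colon T_{t_i} \leq_r T_{t_j}\}$ by at most $n$ elements and is therefore itself infinite. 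Enumerating $\{i \in \N \colon i > N_t\}$ as $i_1 < i_2 < \ldots$ and greedily choosing $\phi(i_k) \in J_{i_k} \setminus \{\phi(i_1), \ldots, \phi(i_{k-1})\}$ produces the required injection.

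The main conceptual step is the modelling choice: quasi-ordering the indices in $\N$ rather than the isomorphism types of the $T_{t_i}$, so that ``$\aleph_0$-embeddable'' translates into ``infinitely many indices $j$ with $T_{t_i} \leq_r T_{t_j}$''. Without this care one could collapse repeated isomorphism types and lose the desired count. Beyond that, the argument is a routine combination of Nash-Williams' theorem, Lemma \ref{lem_rotateKappaCollections}, and a bookkeeping greedy matching step.
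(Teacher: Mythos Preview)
Your proof is correct and takes a genuinely different route from the paper's. The paper applies Laver's labelled-tree theorem (Theorem~\ref{t:Laver}) directly to the decreasing family $\mathcal{T} = \{\{t\}\cup\bigcup_{i>n} T_{t_i} : n\in\N\}$, using a $2$-colouring to pin the root; Lemma~\ref{lem_rotateKappaCollections} then yields an $\aleph_0$-embeddable index $N_t$, and since the family is nested ($m\geq n$ implies the $m$th tree is a rooted subtree of the $n$th) one embedding into some $m\geq n$ immediately gives the embedding for every $n\geq N_t$. You instead work at the level of the individual subtrees $T_{t_i}$, invoke only Nash-Williams' unlabelled theorem (Theorem~\ref{t:NW}), and assemble the global embedding by a greedy matching on indices. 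Your approach is more elementary in that it avoids Laver, at the price of the extra bookkeeping step; your remark about quasi-ordering the index set rather than isomorphism types is exactly the right care needed to make Lemma~\ref{lem_rotateKappaCollections} apply. One minor imprecision: a rooted topological minor embedding $T_{t_i}\leq_r T_{t_{\phi(i)}}$ need not send the root $t_i$ to $t_{\phi(i)}$; its image may lie strictly below $t_{\phi(i)}$. This is harmless, since the edge $tt_i$ then subdivides into the path from $t$ through $t_{\phi(i)}$ down to the actual image of $t_i$, and these paths remain internally disjoint because $\phi$ is injective.
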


\begin{proof}

    Consider a labelling $c \colon T_t \to [2]$ mapping $t$ to $1$, and all remaining vertices of $T_t$ to $2$. 
    By Theorem \ref{t:Laver}, the set $\mathcal{T} = \{\{t\} \cup \bigcup_{i> n } T_{t_i} \colon n \in \N\}$ is well-quasi-ordered by $\leq_r$ respecting the labelling, and so the claim follows by applying Lemma~\ref{lem_rotateKappaCollections} to $\mathcal{T}$ and $\leq_r$ with $\kappa = \aleph_0$.
\end{proof}


%
%


\begin{definition}[Self-similarity]
    \label{def_selfsimilar}
    A ray $R = r_1r_2r_3 \ldots $ in a rooted tree $(T,v)$ which is upwards with respect to the tree order {\em displays self-similarity of $T$} if there are infinitely many $n$ such that there exists a subdivision $H$ of $T_{r_0}$ with $H \subseteq_r T_{r_n}$ and $H(R) \subseteq R$.
\end{definition}

\begin{corollary}
    \label{lem_pushalong}
    Let $(T,v)$ be an infinite rooted tree and let $R = r_1 r_2 r_3 \ldots$ be a ray which is upwards with respect to the tree order. 
    Then there is a $k \in \mathbb{N}$ such that $r_kR$ displays self-similarity of $T$.\footnote{A slightly weaker statement, without the additional condition that $H(R) \subseteq R$ appeared in \cite[Lemma~1]{A79}.}
\end{corollary}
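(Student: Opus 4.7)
The plan is to apply Laver's theorem (Theorem~\ref{t:Laver}) to the principal subtrees $(T_{r_n})_{n \in \mathbb{N}}$ equipped with a $2$-colouring recording which vertices lie on the ray $R$, and then to invoke Lemma~\ref{lem_rotateKappaCollections} to extract an index along which the desired self-similarity holds with infinite multiplicity.

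Concretely, for each $n$ define $c_n \colon V(T_{r_n}) \to [2]$ by $c_n(x) = 1$ if $x \in V(R)$ and $c_n(x) = 2$ otherwise. By Theorem~\ref{t:Laver}, the family $\mathcal{T} = \{(T_{r_n}, c_n) \colon n \in \mathbb{N}\}$ is well-quasi-ordered under the labelled variant of $\leq_r$, so Lemma~\ref{lem_rotateKappaCollections} applied with $\kappa = \aleph_0$ yields some $k$ for which $(T_{r_k}, c_k)$ is $\aleph_0$-embeddable. Unfolding the definitions, there are infinitely many $n > k$ admitting a subdivision $H$ of $T_{r_k}$ with $H \subseteq_r T_{r_n}$ and $c_k(t) = c_n(H(t))$ for every $t \in V(T_{r_k})$. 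To finish it suffices to check that each such $H$ satisfies $H(r_k R) \subseteq R$: label preservation forces $H(r_m) \in V(R)$ for every $m \geq k$, and the subdivision path in $H$ replacing a ray edge $r_m r_{m+1}$ is a path in $T_{r_n}$ between two vertices of $R$; since $T_{r_n}$ is a tree, this unique path coincides with the segment of $R$ joining them, giving $H(r_k R) \subseteq R$.

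The whole argument is a clean application of Laver's theorem once one has spotted the right labelling: the $2$-colouring marking the ray causes Theorem~\ref{t:Laver} to deliver embeddings whose images of ray vertices lie on $R$, while the tree property automatically handles the subdivision paths between them. Using only Nash-Williams' unlabelled theorem (Theorem~\ref{t:NW}) would yield self-similarity of the bare $T_{r_n}$, but give no control over the image of $R$; it is precisely the stronger requirement $H(R) \subseteq R$ in Definition~\ref{def_selfsimilar}, flagged by the footnote, that necessitates the labelled refinement.
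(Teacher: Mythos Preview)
Your proof is correct and follows essentially the same approach as the paper: apply Laver's theorem to the subtrees $T_{r_n}$ with the $2$-colouring marking the ray, then use Lemma~\ref{lem_rotateKappaCollections} with $\kappa=\aleph_0$ to find an $\aleph_0$-embeddable index. Your explicit verification that the subdivision paths of ray edges stay on $R$ (via uniqueness of paths in trees) is a detail the paper leaves implicit.
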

\begin{proof}
    Consider a labelling $c \colon T \to [2]$ mapping the vertices on the ray $R$ to $1$, and labelling all remaining vertices of $T$ with $2$. 
    By Theorem \ref{t:Laver}, the set $\mathcal{T} = \{(T_{r_i},c_i) \colon i \in \mathbb{N}\}$, where $c_i$ is the natural restriction of $c$ to $T_{r_i}$, is well-quasi-ordered by $\leq_r$ respecting the labellings. 
    Hence by Lemma~\ref{lem_rotateKappaCollections}, the number of indices $i$ such that $T_{r_i}$ is not $\aleph_0$-embeddable in $\mathcal{T}$ is finite. 
    Let $k$ be larger than any such $i$. 
    Then, since $T_{r_k}$ is $\aleph_0$-embeddable in $\mathcal{T}$, there are infinitely many $r_j \in r_kR$ such that $T_{r_k} \leq_r T_{r_j}$ respecting the labelling, 
    i.e.\ mapping the ray to the ray, and hence $r_kR$ displays the self similarity of $T$.
\end{proof}

\section{Linkages between rays}
\label{sec4}
In this section we will establish a toolkit for constructing a disjoint system of paths from one  family of disjoint rays to another.

\begin{definition}[Tail of a ray]
    Given a ray $R$ in a graph $\Gamma$ and a finite set $X \subseteq V(\Gamma)$ the \emph{tail of $R$ after $X$}, denoted by $T(R,X)$, is the unique infinite component of $R$ in $\Gamma - X$.
\end{definition}

\begin{definition}[Linkage of families of rays]
    \label{d:linkage}
    Let $\mathcal{R} = (R_i \colon i\in I)$ and $\mathcal{S} = (S_j \colon j \in J)$ be families of vertex disjoint rays, where the initial vertex of each $R_i$ is denoted $x_i$. 
    A~family of paths $\mathcal{P} = (P_i \colon i \in I)$, is a \emph{linkage} from $\mathcal{R}$ to $\mathcal{S}$ if there is an injective function $\sigma \colon I \rightarrow J$ such that
    \begin{itemize}
        \item each $P_i$ joins a vertex $x'_i \in R_i$ to a vertex $y_{\sigma(i)} \in S_{\sigma(i)}$;
        \item the family $\mathcal{T} = (x_i R_i x'_i P_i y_{\sigma(i)} S_{\sigma(i)} \colon i\in I)$ is a collection of disjoint rays.
    \end{itemize}
    We say that $\Tcal$ is obtained by {\em transitioning} from $\Rcal$ to $\Scal$ along the linkage $\Pcal$.
    Given a finite set of vertices $X \subseteq V(\Gamma)$, we say that $\Pcal$ is \emph{after $X$} if $x'_i \in T(R_i,X)$ and $x'_i P_i y_{\sigma(i)} S_{\sigma(i)}$ avoids $X$ for all $i \in I$.
\end{definition}

\begin{lemma}[Weak linking lemma]
    \label{l:weaklink}
    Let $\Gamma$ be a graph and $\epsilon \in \Omega(\Gamma)$. Then for any families $\Rcal = (R_i \colon i \in [n])$ and $\Scal = (S_j \colon j \in [n])$ of vertex disjoint rays in $\epsilon$ and any finite set $X$ of vertices, there is a linkage from $\mathcal{R}$ to $\mathcal{S}$ after $X$.
\end{lemma}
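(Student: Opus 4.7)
The plan is to apply Menger's theorem inside the end-component $C(X,\epsilon)$ of $\Gamma-X$; because every $R_i$ and $S_j$ belongs to $\epsilon$, each tail $T(R_i,X)$ and $T(S_j,X)$ sits inside this single component. A linkage then corresponds to $n$ pairwise disjoint paths connecting the $R_i$-tails to the $S_j$-tails — one per ray — which I would extract from Menger's theorem via an auxiliary construction.

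Concretely, I would form $G^*$ from the subgraph of $\Gamma$ induced on $C(X,\epsilon)$ by adjoining $2n$ fresh vertices $r_1,\dots,r_n,s_1,\dots,s_n$, making $r_i$ adjacent to every vertex of $T(R_i,X)$ and $s_j$ adjacent to every vertex of $T(S_j,X)$. With $A=\{r_1,\dots,r_n\}$ and $B=\{s_1,\dots,s_n\}$, I aim to find $n$ pairwise vertex-disjoint $A$--$B$ paths; by Menger's theorem it suffices to show that every $A$--$B$ separator $Y$ in $G^*$ has $|Y|\ge n$. So suppose $|Y|<n$, and set $Y':=Y\cap V(\Gamma)$, which is finite. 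Since every ray lies in $\epsilon$, the component $C(X\cup Y',\epsilon)$ of $\Gamma-(X\cup Y')$ contains a tail of every $R_i$ and every $S_j$. Because $|Y|<n$ there is some $r_{i^*}\notin Y$ and some $s_{j^*}\notin Y$. Picking $v\in T(R_{i^*},X)\cap C(X\cup Y',\epsilon)$, $w\in T(S_{j^*},X)\cap C(X\cup Y',\epsilon)$ and a $v$--$w$ path inside $C(X\cup Y',\epsilon)$, and prepending the edge $r_{i^*}v$ and appending $ws_{j^*}$, produces an $A$--$B$ path in $G^*-Y$, contradicting the separator property.

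Menger will then deliver pairwise vertex-disjoint $A$--$B$ paths $Q_1,\dots,Q_n$ in $G^*$; each $Q_k$ has the form $r_{i(k)}\,v_k\cdots w_k\,s_{j(k)}$ with all internal vertices in $C(X,\epsilon)$, and disjointness forces the indices $i(k)$ as well as $j(k)$ to be pairwise distinct. Setting $\sigma(i(k)):=j(k)$ gives the required injection, and deleting the two auxiliary endpoints from each $Q_k$ leaves a finite path $P_k\subseteq C(X,\epsilon)\subseteq \Gamma-X$ from some $x'_{i(k)}\in T(R_{i(k)},X)$ to some $y_{j(k)}\in T(S_{j(k)},X)$; the $P_k$ are pairwise vertex-disjoint and avoid $X$ by construction.

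The hard part is the cleanup: one must ensure that the concatenations $T_k=x_{i(k)}R_{i(k)}x'_{i(k)}\,P_k\,y_{j(k)}S_{j(k)}$ are genuine, pairwise disjoint rays. I would kill self-revisits by the standard trimming step, replacing $x'_{i(k)}$ by the last vertex of $Q_k$ (travelled from $r_{i(k)}$ to $s_{j(k)}$) lying on $R_{i(k)}$ and $y_{j(k)}$ by the first subsequent vertex on $S_{j(k)}$, which makes each $T_k$ simple. The more delicate cross-interactions — a $P_k$ meeting some $R_l$ with $l\neq i(k)$, or an initial segment $x_{i(k)}R_{i(k)}x'_{i(k)}$ meeting some $S_m$ — I would forestall by running the whole construction with a suitably enlarged finite set $X^*\supseteq X$ chosen to contain long initial segments of each $R_i$ and $S_j$. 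Any linkage after $X^*$ is in particular a linkage after $X$, so this enlargement is free, and it pushes the Menger paths far enough out that no such accidental overlap with a discarded initial piece can survive.
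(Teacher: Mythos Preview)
Your Menger setup with auxiliary source and sink vertices is natural, and the no-small-separator argument is correct. The gap is in the cleanup. You handle self-revisits by trimming, and you list two cross-interactions to forestall by enlarging $X$ to $X^*$. But you have missed the most serious one: a Menger path $P_k$ can meet the \emph{infinite tail} $y_{j(l)}S_{j(l)}$ of some other target ray (with $j(l)\neq j(k)$), since both live entirely inside $C(X^*,\epsilon)$. If this happens, the rays $T_k$ and $T_l$ share a vertex, so $\mathcal{T}$ is not disjoint. No finite enlargement $X^*\supseteq X$ can push $P_k$ past an infinite tail, so your proposed fix does not touch this case; and pushing $y_{j(l)}$ further out afterwards does not help either, because extending $P_l$ along $S_{j(l)}$ to reach the new endpoint may run straight through the offending vertex of $P_k$.

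The paper's proof avoids exactly this difficulty by a different device. It first builds $n$ pairwise disjoint finite connected subgraphs $K_1,\dots,K_n$, each meeting the tail (after $X':=X\cup\bigcup_i R_ix'_i$) of every $R_i$ and every $S_j$; only \emph{after} $K:=\bigcup_m K_m$ is fixed does it choose the target points $y_j$ on $S_j$, namely as the first vertex of $S_j$ beyond $K$. Menger is then applied between the specific points $\{x'_1,\dots,x'_n\}$ and $\{y_1,\dots,y_n\}$ inside a subgraph built from $K$ and the relevant tails; the no-separator argument only needs each $S_j$ up to $y_j$, so the Menger paths can be taken to avoid every $S_j$ beyond $y_j$. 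This ``choose $y_j$ after building the linkers'' step is precisely the missing idea, and it is not recoverable from your enlargement of $X$ alone.
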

\begin{proof}
    Let us write $x_i$ for the initial vertex of each $R_i$ and let $x'_i$ be the initial vertex of the tail $T(R_i,X)$. 
    Furthermore, let $X' = X \cup \bigcup_{i \in [n]} R_ix'_i$. 
    For $i \in [n]$ we will construct inductively finite disjoint connected subgraphs $K_i \subseteq \Gamma$ for each $i \in [n]$ such that
    \begin{itemize}
        \item $K_i$ meets $T(S_j,X')$ and $T(R_j,X')$ for every $j \in [n]$;
        \item $K_i$ avoids $X'$.
    \end{itemize}
    Suppose that we have constructed $K_1, \dots, K_{m-1}$ for some $m \le n$. 
    Let us write $X_m = X' \cup\bigcup_{i<m} V(K_i)$. 
    Since $R_1,\ldots, R_n$ and $S_1,\ldots, S_n$ lie in the same end $\epsilon$, there exist paths $Q_{i,j}$ between $T(R_i,X_m)$ and $T(S_j,X_m)$  avoiding $X_m$ for all $i \neq j \in [n]$. 
    Let $K_m=F \cup \bigcup_{i \neq j \in [n]} Q_{i,j}$, where $F$ consists of an initial segment of each $T(R_i,X_m)$ sufficiently large to make $K_m$ connected. 
    Then it is clear that $K_m$ is disjoint from all previous $K_i$ and satisfies the claimed properties.

    Let $K = \bigcup_{i=1}^n K_i$ and for each $j \in [n]$, let $y_j$ be the initial vertex of $T(S_j,V(K))$. 
    Note that by construction $T(S_j,V(K))$ avoids $X$ for each $j$, since $K_1$ meets $T(S_j,X)$ and so $T(S_j,V(K)) \subseteq T(S_j,X) $.

    We claim that there is no separator of size $<n$ between $\{x'_1,\ldots,x'_n\}$ and $\{y_1,\ldots,y_n\}$ in the subgraph $\Gamma' \subseteq \Gamma$ where $\Gamma' = K \cup \bigcup_{j=1}^n T(R_j, X') \cup T(S_j, X')$. 
    Indeed, any set of $<n$ vertices must avoid at least one ray $R_i$, at least one graph $K_m$ and one ray $S_j$. 
    However, since $K_m$ is connected and meets $R_i$ and $S_j$, the separator does not separate $x'_i$ from $y_j$.

    Hence, by a version of Menger's theorem for infinite graphs \cite[Proposition 8.4.1]{D16}, there is a collection of $n$ disjoint paths $P_i$ from $x'_i$ to $y_{\sigma(i)}$ in $\Gamma'$. 
    Since $\Gamma'$ is disjoint from $X$ and meets each $R_i x'_i$ in $x'_i$ only, it is clear that $\mathcal{P} = (P_i \colon i \in [n])$ is as desired.
\end{proof}

In some cases we will need to find linkages between families of rays which avoid more than just a finite subset $X$. 
For this we will use the following lemma, which is stated in slightly more generality than needed in this paper. 
Broadly the idea is that if we have a family of disjoint rays $(R_i \colon i \in [n])$ tending to an end $\epsilon$ and a number $a \in \mathbb{N}$, then there is some fixed number $N=N(a,n)$ such that if we have $N$ disjoint graphs $H_i$, each with a specified ray $S_i$ tending to $\epsilon$, then we can `re-route' the rays $(R_i \colon i \in [n])$ to some of the rays $(S_j \colon j \in [N])$, in such a way that we totally avoid $a$ of the graphs $H_i$. 

\begin{lemma}[Strong linking lemma]\label{l:link}
    Let $\Gamma$ be a graph and $\epsilon \in \Omega(\Gamma)$.
    Let $X$ be a finite set of vertices, $a,n \in \mathbb{N}$, and $\Rcal = (R_i \colon i \in [n])$ a family of vertex disjoint rays in $\epsilon$. 
    Let $x_i$ be the initial vertex of $R_i$ and let $x'_i$ the initial vertex of the tail $T(R_i,X)$. 
    
    Then there is a finite number $N = N(\Rcal,X,a)$ with the following property: For every collection $(H_j \colon j\in[N])$ of vertex disjoint subgraphs of $\Gamma$, all disjoint from $X$ and each including a specified ray $S_j$ in $\epsilon$, there is a set $A \subseteq [N]$ of size $a$ and a linkage $\Pcal = (P_i \colon i \in [n])$ from $\Rcal$ to $(S_j \colon j \in [N])$ 
    which is after $X$ and such that the family 
    \[
        \mathcal{T} = \left(x_iR_ix'_iP_iy_{\sigma(i)}S_{\sigma(i)} \colon i\in [n]\right)
    \]
    avoids $\bigcup_{k \in A} H_k$. 
\end{lemma}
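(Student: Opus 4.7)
The plan is to reduce the Strong Linking Lemma to the Weak Linking Lemma (Lemma~\ref{l:weaklink}), exploiting the pairwise disjointness of the $H_j$'s as a counting device. Set $V_0 = X \cup \bigcup_{i \in [n]} V(x_i R_i x'_i)$, a finite set determined only by $\mathcal{R}$ and $X$. Since the $H_j$'s are pairwise disjoint, at most $|V_0|$ of them can meet $V_0$; let $F$ denote this set of indices. I would take $N = N(\mathcal{R}, X, a)$ to be a sufficiently large function of $n$, $a$, and $|V_0|$, to be pinned down by the analysis below.

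My first observation is that the family $\mathcal{T} = (x_i R_i x'_i P_i y_{\sigma(i)} S_{\sigma(i)} : i \in [n])$ can intersect $H_k$ with $k \notin \sigma([n])$ only in two ways: via the fixed initial segments $x_i R_i x'_i$ (contributing only to indices in $F$), or via the chosen finite linkage paths $P_i$. The infinite tails $y_{\sigma(i)} S_{\sigma(i)} \subseteq H_{\sigma(i)}$ automatically avoid every other $H_k$ by disjointness. Consequently, the task reduces to finding an index set $A \subseteq [N] \setminus F$ with $|A| = a$, an injection $\sigma \colon [n] \hookrightarrow [N] \setminus (A \cup F)$, and $n$ vertex-disjoint paths from the $x'_i$ to suitable vertices on the $S_{\sigma(i)}$ inside $\Gamma' := \Gamma - X - \bigcup_{k \in A} H_k$.

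My plan is therefore to (1) prune the indices in $F$; (2) locate a set $A$ of size $a$ in the remaining indices such that the tails $T(R_i, X)$ and some $n$-subfamily of the surviving $S_j$'s still lie in a common end of $\Gamma'$; and (3) apply Lemma~\ref{l:weaklink} inside $\Gamma'$ to produce the linkage. An alternative I would keep in reserve is an iterative approach: apply the Weak Linking Lemma once to obtain a linkage, record the finitely many ``bad'' $H_k$'s hit by its paths, absorb appropriate finite traces of these $H_k$'s into an enlarged forbidden set, and re-apply with a fresh $n$-subfamily of targets; the abundance of candidate $S_j$'s should ensure termination.

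The main obstacle is Step~(2). Because the $H_k$'s can be infinite, removing $a$ of them might in principle destroy the end $\epsilon$ or sever the tails of the $R_i$'s from the surviving $S_j$'s, so the hypotheses of Lemma~\ref{l:weaklink} are not automatic inside $\Gamma'$. The decisive point will be to exploit the disjointness of the $H_k$'s together with the common-end hypothesis: for $N$ sufficiently large in terms of $n$, $a$, and $|V_0|$, a pigeonhole or averaging argument over the $\binom{N-|F|}{a}$ candidate sets $A$ should force at least one choice for which no separator of size $<n$ exists in $\Gamma'$ between $\{x'_1,\ldots,x'_n\}$ and (enough of) the $S_j$'s, after which Menger's theorem delivers the paths as in the proof of Lemma~\ref{l:weaklink}. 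The explicit value of $N$ will emerge from making this Menger-theoretic bookkeeping quantitative.
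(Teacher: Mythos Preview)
Your reduction in the first two paragraphs is correct, and your set $V_0$ is exactly the paper's $X'$. But the heart of the matter is Step~(2), and here the proposal is only a wish, not an argument. You correctly identify that deleting $\bigcup_{k\in A}H_k$ can destroy the end, and nothing you write explains why \emph{some} choice of $A$ avoids this. A pigeonhole over the $\binom{N-|F|}{a}$ candidate sets $A$ has no traction: you would need, for each bad $A$, a witness (say a small separator) that can be counted against the other choices, but separators for different $A$'s live in different ambient graphs $\Gamma-\bigcup_{k\in A}H_k$ and there is no common finite universe to pigeonhole in. Your iterative backup has the same defect in disguise: a linkage produced by Lemma~\ref{l:weaklink} may pass through an \emph{unbounded} number of the $H_j$'s, so ``record the bad ones and retry'' gives no a~priori bound on how many $H_j$'s get burnt per round.

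The paper reverses your order of operations, and this is the missing idea. Instead of choosing $A$ first and then linking in $\Gamma-\bigcup_{k\in A}H_k$, it first \emph{builds} finite connected gadgets $\hat K_1,\dots,\hat K_{n^2}$ in $\Gamma-X'$, grown one $H_j$ at a time so that each $\hat K_m$ meets \emph{exactly} $n$ of the $H_j$'s and contains the point $x'_{m\bmod n}$. Hence $K=\bigcup_m \hat K_m$ meets at most $n^3$ of the $H_j$'s, and any $a$ of the remaining ones form $A$. The Menger argument is then run not in $\Gamma-\bigcup_{k\in A}H_k$ but in the explicit finite-plus-tails subgraph $K\cup\bigcup_i T(R_i,X')\cup\bigcup_{j\in J}H_j$, which by construction is disjoint from $\bigcup_{k\in A}H_k$; the $n^2$ gadgets guarantee no $(n{-}1)$-separator. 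So the quantitative control you were hoping to extract from averaging is obtained instead by growing the linking infrastructure parsimoniously, with $N = |X'| + n^3 + a$.
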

\begin{proof}
Let $X' = X \cup \bigcup_{i \in [n]} R_ix'_i$ and let $N_0 = |X'|$. We claim that the lemma holds with $N = N_0 + n^3 + a$.

    Indeed suppose that $(H_j \colon j\in[N])$ is a collection of vertex disjoint subgraphs as in the statement of the lemma. 
    Since the $H_j$ are vertex disjoint, we may assume without loss of generality that the family $(H_j \colon j\in [n^3+a])$ is disjoint from $X'$.

    For each $i \in [n^2]$ we will build inductively finite, connected, vertex disjoint subgraphs $\hat{K}_i$ such that 
    \begin{itemize}
        \item $\hat{K}_i$ contains $x'_{i\pmod{n}}$, 
        \item $\hat{K}_i$ meets exactly $n$ of the $H_j$, that is $|\{ \, j \in [n^3+a] \,:\, \hat{K}_i \cap H_j \neq \emptyset\}| = n$, and
        \item $\hat{K}_i$ avoids $X'$.
    \end{itemize}

    Suppose we have done so for all $i < m$. Let $X_m = X' \cup \bigcup_{i<m} V(\hat{K}_i)$. 
    We will build inductively for $t = 0, \ldots,n$ increasing connected subgraphs $\hat{K}^t_m$ that meet $R_{i\pmod{n}}$, meet exactly $t$ of the $H_j$, and avoid $X_m$. 

    We start with $\hat{K}^0_m = \emptyset$. 
    For each $t=0,\ldots n-1$, if $T(R_{m \pmod{n}},X_m)$ meets some $H_j$ not met by $\hat{K}^t_m$ then there is some initial vertex $z_t \in T(R_{m \pmod{n}},X_m)$ where it does so and we set $\hat{K}^{t+1}_m := \hat{K}^t_m \cup T(R_{m \pmod{n}},X_m)z_t$. 
    Otherwise we may assume $T(R_{m \pmod{n}},X_m)$ does not meet any such $H_j$. In this case, let $j \in [n^3+a]$ be such that $\hat{K}^t_m \cap H_j = \emptyset$. 
    Since $R_{m \pmod{n}}$ and $S_j$ belong to the same end $\epsilon$, there is some path $P$ between $T(R_{m \pmod{n}},X_m)$ and $T(S_j,X_m)$ which avoids $X_m$. 
    Since this path meets some $H_k$ with $k \in [n^3+a]$ which $\hat{K}^t_m$ does not, there is some initial segment $P'$ which meets exactly one such $H_k$. 
    To form $\hat{K}^{t+1}_m$ we add this path to $\hat{K}^t_m$ together with an appropriately large initial segment of $T(R_{m \pmod{n}},X_m)$ such that $\hat{K}^{t+1}_m$ is connected and contains $x'_{m\pmod{n}}$. 
    Finally we let $\hat{K}_m = \hat{K}^n_m$.

    Let $K = \bigcup_{i \in [n^2]} \hat{K}_i$. Since each $\hat{K}_i$ meets exactly $n$ of the $H_j$, the set 
    \[J = \{ j\in [n^3+a] \,: \, H_j \cap K \neq \emptyset\}\] 
    satisfies $|J| \leq n^3$. 
    For each $j \in J$ let $y_j$ be the initial vertex of $T(S_j,V(K))$. 

    We claim that there is no separator of size $<n$ between $\{x'_1,\ldots x'_n \}$ and $\{ y_j \, : \,j \in J\}$ in the subgraph $\Gamma' \subseteq \Gamma$ where $\Gamma' = K \cup \bigcup_{j \in [n]} T(R_j, X') \cup \bigcup_{j \in J} H_j$. 
    Suppose for a contradiction that there is such a separator $S$. 
    Then $S$ cannot meet every $R_i$, and hence avoids some $R_q$. 
    Furthermore, there are $n$ distinct $\hat{K}_i$ such that $i = q \pmod{n}$, all of which are disjoint.
    Hence there is some $\hat{K}_r$ with $r= q \pmod{n}$ disjoint from $S$. 
    Finally, $|\{ j \in J \, : \, \hat{K}_r \cap H_j \neq \emptyset\}| = n$ and so there is some $H_s$ disjoint from $S$ such that $\hat{K}_r \cap H_s \neq \emptyset$. 
    Since $\hat{K}_r$ meets $T(R_q,X')$ and $H_s$, there is a path from $x'_q$ to $y_s$ in $\Gamma'$, contradicting our assumption.

    Hence, by a version of Menger's theorem for infinite graphs \cite[Proposition 8.4.1]{D16}, there is a family of disjoint paths $\Pcal = (P_i \colon i \in [n])$ in $\Gamma'$ from $x'_i$ to $y_{\sigma(i)}$. 
    Furthermore, since $|J| \leq n^3$ there is some subset $A \subseteq [n^3+a]$ of size $a$ such that $H_k$ is disjoint from $K$ for each $k \in A$. 

    Therefore, since $\Gamma'$ is disjoint from $X'$ and meets each $R_i x'_i$ in $x'_i$ only, the family $\Pcal$ is a linkage from $\Rcal$ to $(S_j)_{j \in [n^3+a]}$ which is after $X$ such that 
    \[
        \mathcal{T} = \left(x_iR_ix'_iP_iy_{\sigma(i)}S_{\sigma(i)} \colon i \in [n] \right)
    \]
    avoids $\bigcup_{k \in A} H_k$.
\end{proof}

We will also need the following result, which allows us to work with paths instead of rays if the end $\epsilon$ is dominated by infinitely many vertices.

\begin{lemma}\label{raystopaths}
    Let $\Gamma$ be a graph and $\epsilon$ an end of $\Gamma$ which is dominated by infinitely many vertices. 
    Let $x_1, x_2, \ldots, x_k$ be distinct vertices. 
    If there are disjoint rays from the $x_i$ to $\epsilon$ then there are disjoint paths from the $x_i$ to distinct vertices $y_i$ which dominate $\epsilon$.
\end{lemma}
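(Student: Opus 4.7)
The plan is to apply the infinite form of Menger's theorem (\cite[Proposition~8.4.1]{D16}, as already used in Lemma~\ref{l:link} above) to the $k$-element set $A = \{x_1, \ldots, x_k\}$ and to the set $D$ of all vertices of $\Gamma$ which dominate $\epsilon$, which is infinite by hypothesis. If we can show that no set of fewer than $k$ vertices separates $A$ from $D$ in $\Gamma$, then Menger yields $k$ disjoint $A$-$D$ paths; since $|A| = k$, each $x_i$ must be the initial vertex of exactly one such path, and the terminal vertices $y_i \in D$ are automatically distinct. These are the desired paths.

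Everything therefore reduces to verifying the separator claim. Suppose for contradiction that $S \subseteq V(\Gamma)$ with $|S| < k$ separates $A$ from $D$ in $\Gamma$. Since the rays $R_1, \ldots, R_k$ are disjoint and $|S| < k$, at least one of them --- say $R_1$ --- is entirely disjoint from $S$; in particular $x_1 \notin S$, and $x_1$ lies in the same component of $\Gamma - S$ as a tail of $R_1$, which is precisely the end-component $C(S, \epsilon)$.

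To derive the contradiction it now suffices to exhibit some $d \in D$ which also lies in $C(S, \epsilon)$, for then $S$ fails to separate $x_1$ from $d$. Since $D$ is infinite and $S$ is finite, pick any $d \in D \setminus S$, and let $P_1, P_2, \ldots$ be infinitely many internally disjoint $d$-$R$-paths witnessing that $d$ dominates $\epsilon$, for some ray $R \in \epsilon$. Because the $P_\ell$ are internally disjoint, only finitely many of them can meet the finite set $S$; because their endpoints on $R$ are all distinct and a tail of $R$ lies in $C(S, \epsilon)$, only finitely many of those endpoints lie outside $C(S, \epsilon)$. Hence some $P_\ell$ avoids $S$ entirely and terminates inside $C(S, \epsilon)$, placing $d$ in $C(S, \epsilon)$ and yielding the contradiction.

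The one subtle point in the whole argument is exactly this last step, which requires combining two separate pigeonhole observations --- about which $P_\ell$ avoid $S$, and about where their endpoints land on $R$ --- to produce a single path with both good properties at once. Once this is in hand, the remainder is a direct appeal to infinite Menger.
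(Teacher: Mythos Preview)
Your proof is correct and takes a genuinely different route from the paper's. The paper argues by induction on $k$: it picks an arbitrary dominating vertex $y_k$, finds a $y_k$--$\bigcup_i R_i$-path $P$ landing (without loss of generality) on $R_k$ at some vertex $u$, declares $x_kR_kuPy_k$ to be the $k$th path, and then applies the induction hypothesis inside $\Gamma - x_kR_kuPy_k$ to the remaining $k-1$ rays. Your argument instead dispenses with the induction entirely and reduces the problem to a single application of Menger's theorem between $A = \{x_1,\ldots,x_k\}$ and the set $D$ of $\epsilon$-dominating vertices, using the $k$ disjoint $\epsilon$-rays together with the infinitude of $D$ to rule out any $A$--$D$ separator of size less than $k$. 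Your route is cleaner and more uniform, though it leans on Menger (already in the paper's toolbox from Lemmas~\ref{l:weaklink} and~\ref{l:link}); the paper's hands-on induction is slightly more elementary and makes the rerouting of each ray explicit.
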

\begin{proof}
    We argue by induction on $k$. The base case $k = 0$ is trivial, so let us assume $k > 0$.

    Consider any family of disjoint rays $R_i$, each from $x_i$ to $\epsilon$. 
    Let $y_k$ be any vertex dominating $\epsilon$. 
    Let $P$ be a $y_k$\,--\,$\bigcup_{i = 1}^kR_i$\,-path. 
    Without loss of generality the endvertex $u$ of $P$ in $\bigcup_{i = 1}^kR_i$ lies on $R_k$. 
    Then by the induction hypothesis applied to the graph $\Gamma - R_kuP$ we can find disjoint paths in that graph from the $x_i$ with $i < k$ to vertices $y_i$ which dominate $\epsilon$. 
    These paths together with $R_kuP$ then form the desired collection of paths.
\end{proof}

To go back from paths to rays we will use the following lemma. 

\begin{lemma}\label{pathstorays}
    Let $\Gamma$ be a graph and $\epsilon$ an end of $\Gamma$ which is dominated by infinitely many vertices. 
    Let $y_1, y_2, \ldots, y_k$ be vertices, not necessarily distinct, dominating $\Gamma$. 
    Then there are rays $R_i$ from the respective $y_i$ to $\epsilon$ which are disjoint except at their initial vertices.
\end{lemma}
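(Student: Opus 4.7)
The plan is an inductive round-robin construction of the rays $R_i$, built on the following key observation: if $v$ and $w$ are distinct dominators of $\epsilon$ and $X$ is a finite set of vertices with $v, w \notin X$, then there is a $v$--$w$-path in $\Gamma - X$. Indeed, $v$ admits infinitely many internally disjoint paths to some fixed ray $S \in \epsilon$; only finitely many of these meet $X$, and only finitely many have their endpoint outside the tail of $S$ in $C(X, \epsilon)$, so some such path witnesses $v \in C(X, \epsilon)$. Similarly $w \in C(X, \epsilon)$, and hence they are joined by a path in $\Gamma - X$.

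Since $\epsilon$ has infinitely many dominators, I fix a sequence $d_1, d_2, \ldots$ of distinct dominators of $\epsilon$ in $V(\Gamma) \setminus \{y_1, \ldots, y_k\}$. I then construct finite paths $P_i^n$ for $i \in [k]$ and $n \in \N$ satisfying: (i) $P_i^n$ starts at $y_i$ and ends at a vertex dominating $\epsilon$; (ii) $P_i^n \subseteq P_i^{n+1}$; (iii) the family $(P_i^n : i \in [k])$ is pairwise disjoint except at initial vertices; and (iv) $P_j^{n+1}$ strictly extends $P_j^n$, where $j := (n \bmod k) + 1$. Start with $P_i^0 := \{y_i\}$. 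At stage $n+1$, let $v$ be the endpoint of $P_j^n$, let $X := \bigl(\bigcup_i V(P_i^n)\bigr) \setminus \{v\}$, and pick some $d_m$ that lies neither in $X \cup \{v\}$ nor among the endpoints of any previous $P_i^{n'}$ (possible since $X$ and the set of used endpoints are finite, but the $d_m$ are infinite). By the key observation there is a $v$--$d_m$-path $Q$ in $\Gamma - X$, and by shortening we may assume $Q$ meets $P_j^n$ only at $v$. Set $P_j^{n+1} := P_j^n \cup Q$ and $P_i^{n+1} := P_i^n$ for $i \ne j$; all invariants persist, even when several $y_i$ coincide, because $X$ deliberately keeps the shared initial vertex $v$ available.

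Setting $R_i := \bigcup_n P_i^n$ yields rays from the $y_i$, pairwise disjoint except at initial vertices. The main point to verify is that $R_i \in \epsilon$. By (iv), $R_i$ passes through infinitely many of the $d_m$. Given any finite $X \subseteq V(\Gamma)$, the ray $R_i$ meets $X$ only finitely often, so it has a tail in a single component $C$ of $\Gamma - X$; since $R_i$ hits infinitely many $d_m$ while only finitely many $d_m$ lie in $X$, the component $C$ contains a dominator of $\epsilon$ outside $X$, and by the key observation this forces $C = C(X, \epsilon)$. Thus $R_i \in \epsilon$, as required. The principal obstacle is exactly this last step: verifying that the assembled rays belong to $\epsilon$. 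The key observation does double duty in both enabling each extension and certifying the correct end of each $R_i$.
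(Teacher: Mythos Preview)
Your proof is correct and follows essentially the same approach as the paper: both build the rays $R_i$ by repeatedly extending finite paths ending at dominators of $\epsilon$ to fresh dominators, then argue that a ray containing infinitely many dominators must lie in $\epsilon$. The only differences are cosmetic---you extend one path per step in a round-robin while the paper extends all $k$ at once, and you make explicit the ``key observation'' and the verification that $R_i \in \epsilon$, which the paper leaves implicit.
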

\begin{proof}
    We recursively build for each $n\in\Nbb$ paths $P_1^n, \ldots, P_k^n$, each $P_i^n$ from 
    $y_i$ to a vertex $y_i^n$ dominating $\epsilon$, disjoint except at their initial vertices, such that for $m < n$ each $P_i^n$ properly extends $P_i^m$. 
    We take $P_i^0$ to be a trivial path.
    For $n > 0$, build the $P_i^n$ recursively in $i$: 
    To construct $P_i^n$, we start by taking $X_i^n$ to be the finite set of all the vertices of the $P_j^n$ with $j < i$ or $P_j^{n-1}$ with $j \geq i$. 
    We then choose a vertex $y_i^n$ outside of $X_i^n$ which dominates $\epsilon$ and a path $Q_i^n$ from $y_i^{n-1}$ to $y_i^n$ internally disjoint from $X_i^n$. 
    Finally we let $P_i^n := P_i^{n-1}y_{n-1}Q_i^n$. 

    Finally, for each $i \leq k$, we let $R_i$ be the ray $\bigcup_{n \in \Nbb} P_i^n$. 
    Then the $R_i$ are disjoint except at their initial vertices, and they are in $\epsilon$, since each of them contains infinitely many dominating vertices of $\epsilon$.
\end{proof}

\section{%
\texorpdfstring{$G$-tribes and concentration of $G$-tribes towards an end}%
{G-tribes and concentration of G-tribes towards an end}%
}
\label{secGtribes}

For showing that a given graph $G$ is ubiquitous with respect to a fixed relation $\triangleleft$, we shall assume that $n G\triangleleft \Gamma$ for every $n \in \N$ and need to show that this implies that $\aleph_0 G\triangleleft \Gamma$. 
Since each subgraph witnessing that $n G \triangleleft \Gamma$ will be a collection of $n$ disjoint subgraphs each being a witness for $G \triangleleft \Gamma$, it will be useful to introduce some notation for talking about these families of collections of $n$ disjoint witnesses for each $n$.

To do this formally, we need to distinguish between a relation like the topological minor relation and the subdivision relation. 
Recall that we write $G \leq^* H$ if $H$ is a subdivision of $G$ and $G \leq \Gamma$ if $G$ is a topological minor of $\Gamma$.
We can interpret the topological minor relation as the composition of the subdivision relation and the subgraph relation.

Given two relations $R$ and $S$, let their \emph{composition $S \circ R$} be the relation defined by $x(S \circ R)z$ if and only if there is a $y$ such that $xRy$ and $ySz$. 

Hence we have that
$G \,(\subseteq \circ \leq^{*})\, \Gamma$ if and only if there exists $H$ such that $G \leq^* H \subseteq \Gamma$, that is, if and only if $G \leq \Gamma$.

While in this paper we will only work with the topological minor relation, we will state the following definition and lemmas in greater generality, so that we may apply them in later papers in this series \cite{BEEGHPTII,BEEGHPTIII,BEEGHPTIV}.

In general, we want to consider a pair $(\triangleleft$, $\blacktriangleleft)$ of binary relations of graphs with the following properties.
\begin{enumerate}[label={(R\arabic*)}]
    \item $\triangleleft\ =\ (\subseteq \circ \blacktriangleleft)$;
    \item \label{d:relationcomp} Given a set $I$ and a family $(H_i : i \in I)$ of pairwise disjoint graphs with $G \blacktriangleleft H_i$ for all $i \in I$, then ${\lvert I \rvert \cdot G \blacktriangleleft \bigcup \{H_i : i \in I \}}$.
\end{enumerate}
We call a pair $(\triangleleft, \blacktriangleleft)$ with these properties \emph{compatible}.

Other examples of compatible pairs are $(\subseteq, \cong)$, where $\cong$ denotes the isomorphism relation, as well as $(\preceq, \preceq^*)$, where $G \preceq^* H$ if $H$ is an inflated copy of $G$.

\begin{definition}[\Gtribe s]
    Let $G$ and $\Gamma$ be graphs, and let $(\triangleleft, \blacktriangleleft)$ be a compatible pair of relations between graphs.
    \begin{itemize}
        \item A \emph{\Gtribe\ in $\Gamma$ (with respect to $(\triangleleft, \blacktriangleleft)$)} is a collection $\mathcal{F}$ of finite sets $F$ of disjoint subgraphs~$H$ of $\Gamma$ such that $G \blacktriangleleft H$ for each \emph{member of $\Fcal$} $H \in \bigcup \Fcal$.
        \item A \Gtribe\ $\mathcal{F}$ in $\Gamma$ is called \emph{thick}, if for each $n \in \mathbb{N}$ there is a \emph{layer} $F \in \mathcal{F}$ with $|F| \geq n$; 
            otherwise, it is called \emph{thin}.\footnote{A similar notion of \emph{thick} and \emph{thin families} was also introduced by Andreae in \cite{A79} (in German) and in \cite{A13}. 
            The remaining notions, and in particular the concept of a \emph{concentrated $G$-tribe}, which will be the backbone of essentially all our results in this series of papers, is new.} 

        \item A \Gtribe\ $\mathcal{F}'$ in $\Gamma$ is a \emph{\Gsubtribe} of a \Gtribe\ $\mathcal{F}$ in $\Gamma$, denoted by $\Fcal' \triangleleft \Fcal$, 
            if there is an injection $\Psi\colon\Fcal'\to \Fcal$ such that for each $F' \in \mathcal{F}'$ there is an injection $\varphi_{F'} \colon F' \to \Psi(F')$ such that $V(H') \subseteq V(\varphi_{F'}(H'))$ for each $H' \in F'$. 
            The \Gsubtribe \ $\Fcal'$ is called \emph{flat}, denoted by $\Fcal' \subseteq \Fcal$, if there is such an injection $\Psi$ satisfying $F' \subseteq \Psi(F')$.

        \item A thick $G$-tribe $\mathcal{F}$ in $\Gamma$ is \emph{concentrated at an end $\epsilon$} of $\Gamma$, if  for every finite vertex set $X$ of $\Gamma$, the $G$-tribe $\Fcal_X = \{F_X \colon F \in \Fcal \}$ consisting of the layers $F_X=\{H \in F: H \not\subseteq C(X,\epsilon)\}\subseteq F$ is a thin subtribe of $\Fcal$.
    \end{itemize}
\end{definition}
Hence, for a given compatible pair $(\triangleleft, \blacktriangleleft)$, if we wish to show that $G$ is $\triangleleft$-ubiquitous, we will need to show that the existence of a thick $G$-tribe in $\Gamma$ with respect to $(\triangleleft, \blacktriangleleft)$ implies $\aleph_0 G\triangleleft \Gamma$. 
We first observe that removing a thin $G$-tribe from a thick $G$-tribe always leaves a thick $G$-tribe.
\begin{lemma}[{cf.~\cite[Lemma~3]{A79} or \cite[Lemma~2]{A13}}]
    \label{l:removethin}
    Let $\Fcal$ be a thick $G$-tribe in $\Gamma$ and let $\Fcal'$ be a thin subtribe of $\Fcal$, witnessed by $\Psi\colon \Fcal'\to \Fcal$ and $(\varphi_{F'} \colon F' \in \mathcal{F}')$. 
    For $F\in \Fcal$, if $F\in \Psi(\Fcal')$, let $\Psi^{-1}(F)=\{F'_F\}$ and set $\hat{F}=\varphi_{F'_F}(F'_F)$. If $F\notin \Psi(\Fcal')$, set $\hat{F}=\emptyset$. Then \[\Fcal'':=\{F\setminus \hat{F}\colon F\in \Fcal\}\] is a thick flat $G$-subtribe of $\Fcal$.
\end{lemma}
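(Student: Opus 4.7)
The plan is to verify directly that the collection $\Fcal'' = \{F \setminus \hat F : F \in \Fcal\}$ satisfies the three required properties: that it is a $G$-tribe, that it is a \emph{flat} subtribe of $\Fcal$, and that it is \emph{thick}. Most of this is bookkeeping; the only genuine content lies in the thickness, and that in turn comes from a uniform cardinality bound on the sets $\hat F$ that we are deleting.

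First, I would unpack what $\hat F$ is. By definition of the witnessing data $\Psi$ and $(\varphi_{F'} : F'\in \Fcal')$ for $\Fcal' \triangleleft \Fcal$, whenever $F\in \Psi(\Fcal')$ the map $\varphi_{F'_F}$ has codomain $\Psi(F'_F) = F$; in particular $\hat F = \varphi_{F'_F}(F'_F) \subseteq F$, and of course this also holds trivially when $\hat F = \emptyset$. So each $F\setminus \hat F$ is a subset of $F$, and thus a finite collection of pairwise disjoint subgraphs of $\Gamma$, each of which satisfies $G \blacktriangleleft H$ because $F$ was a layer of the $G$-tribe $\Fcal$. Hence $\Fcal''$ is a $G$-tribe.

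For flatness I would define $\Psi'' \colon \Fcal'' \to \Fcal$ by $\Psi''(F\setminus \hat F) = F$ (treating $\Fcal$ as indexed by itself so that $\Psi''$ is well defined and injective), and take each $\varphi''_{F\setminus \hat F}$ to be the inclusion $F\setminus \hat F \hookrightarrow F$. The condition $V(H')\subseteq V(\varphi''_{F\setminus\hat F}(H'))$ then holds trivially, and since $F\setminus \hat F \subseteq \Psi''(F\setminus \hat F) = F$ for every layer, this witnesses $\Fcal'' \subseteq \Fcal$.

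The key step is thickness, and this is where the hypothesis that $\Fcal'$ is thin is used. Because $\Fcal'$ is thin, there exists a uniform bound $N\in\Nbb$ with $\lvert F'\rvert \leq N$ for every $F'\in \Fcal'$, and since $\varphi_{F'_F}$ is an injection we get $\lvert \hat F \rvert \leq \lvert F'_F \rvert \leq N$ for every $F \in \Fcal$ (with $\lvert \hat F \rvert = 0$ for $F \notin \Psi(\Fcal')$). Now given any $n\in\Nbb$, thickness of $\Fcal$ provides some $F\in \Fcal$ with $\lvert F \rvert \geq n + N$, and hence $\lvert F\setminus \hat F\rvert \geq n$. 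Thus $\Fcal''$ has layers of arbitrarily large size, i.e.\ it is thick.

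There is no real obstacle here; the only point that requires a moment's care is recognizing that the witnessing injections $\varphi_{F'}$ force $\hat F \subseteq F$, which is what makes both the flatness and the cardinality estimate $\lvert \hat F \rvert \leq N$ work.
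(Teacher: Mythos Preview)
Your proof is correct and follows essentially the same approach as the paper: verify flatness (which the paper dismisses as ``obvious''), then use the uniform bound $N$ from thinness of $\Fcal'$ together with thickness of $\Fcal$ to find a layer $F$ with $|F|\ge n+N$ and hence $|F\setminus\hat F|\ge n$. You have simply spelled out the details the paper leaves implicit.
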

\begin{proof}
    $\Fcal''$ is obviously a flat subtribe of $\Fcal$. 
    As $\Fcal'$ is thin, there is a $k\in \Nbb$ such that $|F'|\le k$ for every $F'\in \Fcal'$. Thus $|\hat{F}|\le k$ for all $F\in \Fcal$.
    Let $n\in \Nbb$. 
    As $\Fcal$ is thick, there is a layer $F\in \Fcal$ satisfying $|F|\ge n+k$. 
    Thus $|F\setminus \hat{F}|\ge n+k-k=n$.
\end{proof}
Given a thick $G$-tribe, the members of this tribe may have different properties, for example, some of them contain a ray belonging to a specific end $\epsilon$ of $\Gamma$ whereas some of them do not. 
The next lemma allows us to restrict onto a thick subtribe, in which all members have the same properties, as long as we consider only finitely many properties. 
E.g. we find a subtribe in which either all members contain an $\epsilon$-ray, or none of them contain such a ray. 

\begin{lemma}[Pigeon hole principle for thick $G$-tribes]
    \label{Lem_finitechoice}
    Suppose for some $k \in \N$, we have a $k$-colouring $c\colon \bigcup \mathcal{F} \to [k]$ of the members of some thick $G$-tribe $\mathcal{F}$ in $\Gamma$. 
    Then there is a monochromatic, thick, flat $G$-subtribe $\mathcal{F}'$ of $\mathcal F$.
\end{lemma}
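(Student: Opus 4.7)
The plan is to argue by a straightforward pigeonhole on the layers of the tribe, using Lemma~\ref{l:removethin} implicitly: we simply partition each layer according to the colouring and observe that at least one colour class must still contain arbitrarily large layers.

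For each layer $F \in \Fcal$ and each colour $i \in [k]$ set
\[
    F^i \;:=\; \{H \in F \colon c(H) = i\} \;\subseteq\; F,
\]
and define the candidate flat subtribe
\[
    \Fcal^i \;:=\; \bigl(F^i \colon F \in \Fcal\bigr),
\]
regarded as an indexed family (so distinct layers $F \neq F'$ of $\Fcal$ always give distinct entries of $\Fcal^i$, even if $F^i = F'^i$ as sets). Each $\Fcal^i$ is a flat $G$-subtribe of $\Fcal$: the injection $\Psi\colon \Fcal^i \to \Fcal$ sending $F^i$ to $F$ and the inclusion maps $\varphi_{F^i}\colon F^i \hookrightarrow F$ witness this, and every $H \in F^i$ still satisfies $G \blacktriangleleft H$ since it is inherited from $\Fcal$. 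By construction, all members of $\Fcal^i$ have colour $i$, so $\Fcal^i$ is monochromatic.

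It remains to show that at least one of $\Fcal^1, \ldots, \Fcal^k$ is thick. Suppose for a contradiction that every $\Fcal^i$ is thin. Then for each $i \in [k]$ there is some $n_i \in \Nbb$ such that $|F^i| \leq n_i$ for every $F \in \Fcal$. Setting $N := n_1 + \cdots + n_k$, we obtain
\[
    |F| \;=\; \sum_{i=1}^{k} |F^i| \;\leq\; N
\]
for every $F \in \Fcal$, contradicting the thickness of $\Fcal$. Hence some $\Fcal^i$ is thick, and this $\Fcal^i$ is the desired monochromatic, thick, flat $G$-subtribe of $\Fcal$.

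The proof is essentially a one-step pigeonhole argument, and no genuine obstacle arises; the only point requiring a little care is the bookkeeping needed to verify that $\Fcal^i$ literally meets the definition of a flat $G$-subtribe (in particular that $\Psi$ can be chosen injective even when the restriction map $F \mapsto F^i$ collapses distinct layers), which is handled by treating $\Fcal^i$ as an indexed family rather than a set.
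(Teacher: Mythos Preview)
Your argument is correct and is a clean global pigeonhole: partition every layer by colour and observe that if all $k$ colour classes were thin, their union~$\Fcal$ would be thin too. The paper takes a slightly different, two-stage route: it first extracts a sequence of layers $F_1, F_2, \ldots$ of strictly increasing sizes, applies pigeonhole inside each $F_i$ to find a majority colour~$c_i$, and then applies pigeonhole again on the sequence $(c_i)$ to pass to an infinite subsequence with constant colour~$c^*$. Your version is more direct and avoids choosing a sequence of layers at the outset; the paper's version avoids the small bookkeeping issue you flagged about $\Psi$ being injective (since distinct indices~$i$ give distinct layers $F_i$ by construction, and the $F'_i \subseteq F_i$ are automatically distinguished). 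Either way the content is a one-line pigeonhole, and your handling of the injectivity of~$\Psi$ via indexing is adequate; alternatively you could simply take $\Fcal^i$ as a set and note that collapsing duplicate layers cannot destroy thickness.
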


\begin{proof}
    Since $\mathcal{F}$ is a thick $G$-tribe, there is a sequence $(n_i \colon i \in \N)$ of natural numbers and a sequence $(F_i \in \Fcal \colon i \in \N)$ such that 
    \[ n_1 \leq |F_1| < n_2 \leq |F_2| < n_3 \leq |F_3| < \cdots. \]
    Now for each $i$, by pigeon hole principle, there is one colour $c_i \in [k]$ such that the subset $F' _i \subseteq F_i$ of elements of colour $c_i$ has size at least $n_i / k$. 
    Moreover, since $[k]$ is finite, there is one colour $c^* \in [k]$ and an infinite subset $I \subseteq \N$ such that $c_i=c^*$ for all $i \in I$. 
    But this means that $\mathcal{F}' := \{F'_i \colon i \in I \}$ is a monochromatic, thick, flat $G$-subtribe.
\end{proof}

In this series of papers we will be interested in graph relations such as $\subseteq$, $\leq$ and $\preceq$. 
Given a connected graph $G$ and a compatible pair of relations $(\triangleleft,\blacktriangleleft)$ we say that a $G$-tribe $\Fcal$ w.r.t $(\triangleleft,\blacktriangleleft)$ is \emph{connected} if every member $H$ of $\Fcal$ is connected. Note that for relations $\blacktriangleleft$ like $\cong, \le^ *, \preceq^*$, if $G$ is connected and $G \blacktriangleleft H$, then $H$ is connected. 
In this case, any $G$-tribe will be connected.

\begin{lemma}\label{l:concentrated}
    Let $G$ be a connected graph (of arbitrary cardinality), $(\triangleleft,\blacktriangleleft)$ a compatible pair of relations of graphs and $\Gamma$ a graph containing a thick connected $G$-tribe $\mathcal{F}$ w.r.t.\ $(\triangleleft,\blacktriangleleft)$. 
    Then either $\aleph_0 G \triangleleft \Gamma$, or there is a thick flat subtribe $\Fcal'$ of $\Fcal$ and an end $\epsilon$ of $\Gamma$ such that $\Fcal'$ is concentrated at $\epsilon$.
\end{lemma}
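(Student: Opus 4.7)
We argue by contraposition: suppose no thick flat subtribe of $\Fcal$ is concentrated at any end of $\Gamma$; we deduce $\aleph_0 G \triangleleft \Gamma$. By property~\ref{d:relationcomp}, it suffices to exhibit an infinite family $(H_n)_{n\in\N}$ of pairwise disjoint members of $\bigcup\Fcal$, since then $\aleph_0 G \blacktriangleleft \bigcup_n H_n \subseteq \Gamma$ yields $\aleph_0 G \triangleleft \Gamma$.

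The strategy is a recursive construction maintaining simultaneously a descending chain of thick flat subtribes $\Fcal = \Fcal_0 \supseteq \Fcal_1 \supseteq \cdots$, an increasing chain of finite vertex sets $\emptyset = X_0 \subseteq X_1 \subseteq \cdots$, and a decreasing chain of components $C_n$ of $\Gamma - X_n$, with the invariant that every member of $\Fcal_n$ lies in $C_n$. At the recursion step, extend $X_n$ to $X_{n+1}$ by adjoining one new vertex of $C_n$; since layers consist of pairwise disjoint subgraphs, discarding the at most $|X_{n+1}|$ members per layer meeting $X_{n+1}$ still leaves a thick flat subtribe, whose surviving members each lie in some component of $\Gamma - X_{n+1}$ contained in $C_n$.

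We distinguish two subcases at each step. \emph{Subcase (a)}: some single component $C_{n+1} \subseteq C_n$ of $\Gamma - X_{n+1}$ captures the members of a thick flat subtribe $\Fcal_{n+1}$; continue the recursion with this choice. \emph{Subcase (b)}: no single component captures a thick subtribe, so the members spread thinly across infinitely many components of $\Gamma - X_{n+1}$. Since members in distinct components of $\Gamma - X_{n+1}$ are automatically disjoint, a diagonal selection across layers—each possessing members in arbitrarily many components, by thickness together with the per-component thinness—produces the desired infinite family $(H_n)$ of pairwise disjoint members directly.

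If Subcase~(a) holds at every step, the nested components $(C_n)$ determine an end $\epsilon$ of $\Gamma$ with $C(X_n,\epsilon) = C_n$ for each $n$. Selecting for each $n$ a layer $F_n^* \in \Fcal_n$ with $|F_n^*| \geq n$ yields a thick flat subtribe $\Fcal_\infty := \{F_n^* : n \in \N\}$ of $\Fcal$. For any finite $Y \subseteq V(\Gamma)$, pick $N$ with $X_N \supseteq Y$; for each $n \geq N$, every member of $F_n^*$ lies in $C_n \subseteq C(X_N,\epsilon) \subseteq C(Y,\epsilon)$, so $(\Fcal_\infty)_Y$ has only the finitely many non-empty layers indexed by $n < N$ and is therefore thin. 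Thus $\Fcal_\infty$ is concentrated at $\epsilon$, contradicting the standing assumption. The main obstacle of this plan lies in Subcase~(b): rigorously carrying out the diagonal construction so that the chosen members (coming from possibly different layers) remain pairwise disjoint—this requires tracking the components already used and exploiting thickness to guarantee that each new layer provides members in sufficiently many ``fresh'' components.
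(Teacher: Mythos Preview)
Your proof has a genuine gap in the Subcase~(a) conclusion. You write ``for any finite $Y \subseteq V(\Gamma)$, pick $N$ with $X_N \supseteq Y$'', but nothing in your construction guarantees such an $N$ exists: at each step you adjoin \emph{one} vertex of $C_n$, so $\bigcup_n X_n$ is merely some countable set of vertices chosen along the way, and an arbitrary finite $Y$ need not be contained in it. Without $X_N \supseteq Y$ you cannot conclude $C_N \subseteq C(Y,\epsilon)$, and hence cannot conclude that $(\Fcal_\infty)_Y$ is thin. Relatedly, your assertion that the nested sequence $(C_n)$ ``determines an end $\epsilon$ with $C(X_n,\epsilon)=C_n$'' needs justification: to invoke the Diestel--K\"uhn correspondence between ends and directions one needs a consistent choice of component for \emph{every} finite separator, not just for the particular sequence $(X_n)$.

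The paper's proof avoids both issues by organising the argument differently. Rather than growing $X_n$ one vertex at a time, it first shows (via the pigeonhole lemma for tribes) that after discarding the case where infinitely many components of $\Gamma-X$ contain a copy of $G$---which immediately yields $\aleph_0 G \triangleleft \Gamma$, making your diagonal argument in Subcase~(b) unnecessary---one may assume that for \emph{every} finite $X$ some single component hosts a thick flat subtribe. The recursion then proceeds by choosing each $X_n$ strategically so as to split off a copy $H_n$ into a side component while retaining a thick subtribe in another; if this can be done infinitely often the $H_n$ are disjoint and we are done, and if it terminates at some $\Fcal_N$, then the very failure to split witnesses that for every finite $X$ there is a \emph{unique} component $C_X$ containing a thick subtribe of $\Fcal_N$. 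This map $X \mapsto C_X$ is a direction on all finite sets, so Diestel--K\"uhn yields the end $\epsilon$ and concentration follows directly. The key difference is that the paper earns control over arbitrary finite separators, whereas your construction only controls the specific sequence $(X_n)$ and then overreaches.
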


\begin{proof}
    For every finite vertex set $X \subseteq V(\Gamma)$, only a thin subtribe of $\Fcal$ can meet $X$, so by Lemma \ref{l:removethin} a thick flat subtribe $\Fcal''$ is contained in the graph $\Gamma-X$. 
    Since each member of $\Fcal''$ is connected, any member $H$ of $\Fcal''$ is contained in a unique component of $\Gamma -X$. 
    If for any $X$, infinitely many components of $\Gamma - X$ contain a $\blacktriangleleft$-copy of $G$, the union of all these copies is a $\blacktriangleleft$-copy of $\aleph_0G$ in $\Gamma$ by \ref{d:relationcomp}, hence $\aleph_0G\triangleleft \Gamma$. 
    Thus, we may assume that for each $X$, only finitely many components contain elements from $\Fcal''$, and hence, by colouring each $H$ with a colour corresponding to the component of $\Gamma -X$ containing it, we may assume by the pigeon hole principle for $G$-tribes, Lemma~\ref{Lem_finitechoice}, that at least one component of $\Gamma-X$ contains a thick flat subtribe of $\Fcal$. 
    
    Let $C_0 = \Gamma$ and $\Fcal_0 = \Fcal$ and consider the following recursive process: 
    If possible, we choose a finite vertex set $X_{n}$ in $C_n$ such that there are two components $C_{n+1} \neq D_{n+1}$ of $C_n - X_n$ where $C_{n+1}$ contains a thick flat subtribe $\Fcal_{n+1} \subseteq \Fcal_n$ and $D_{n+1}$ contains at least one $\blacktriangleleft$-copy $H_{n+1}$ of $G$. 
    Since by construction all $H_n$ are pairwise disjoint, we either find infinitely many such $H_n$ and thus, again by \ref{d:relationcomp}, an $\aleph_0 G \triangleleft \Gamma$, or our process terminates at step $N$ say. 
    That is, we have a thick flat subtribe $\Fcal_{N}$ contained in a subgraph $C_N$ such that there is no finite vertex set $X_N$ satisfying the above conditions.

    Let $\Fcal' := \Fcal_{N}$. We claim that for every finite vertex set $X$ of $\Gamma$, there is a unique component $C_{X}$ of $\Gamma - X$ that contains a thick flat $G$-subtribe of $\Fcal'$. 
    Indeed, note that if for some finite $X \subseteq \Gamma$ there are two components $C$ and $C'$ of $\Gamma - X$ both containing thick flat $G$-subtribes of $\Fcal'$, then since every $G$-copy in $\Fcal'$ is contained in $C_N$, it must be the case that $C \cap C_N \neq \emptyset \neq C' \cap C_N$. 
    But then $X_{N} =X \cap C_N \neq \emptyset$ is a witness that our process could not have terminated at step $N$. 
 
    Next, observe that whenever $X' \supseteq X $, then $C_{X'} \subseteq C_X$. By a theorem of Diestel and K\"uhn, \cite{Ends}, it follows that there is a unique end $\epsilon$ of $\Gamma$ such that $C(X,\epsilon) = C_X$ for all finite $X \subseteq \Gamma$. 
    It now follows easily from the uniqueness of $C_X = C(X,\epsilon)$ that $\Fcal'$ is concentrated at this $\epsilon$. 
\end{proof} 
We note that concentration towards an end $\epsilon$ is a robust property in the following sense:

\begin{lemma}
    \label{lem_subtribesinheritconcentration}
    Let $G$ be a connected graph (of arbitrary cardinality), $(\triangleleft,\blacktriangleleft)$ a compatible pair of relations of graphs and $\Gamma$ a graph containing a thick connected $G$-tribe $\mathcal{F}$ w.r.t.\ $(\triangleleft,\blacktriangleleft)$ concentrated at an end $\epsilon$ of $\Gamma$. 
    Then the following assertions hold: 
    \begin{enumerate}
        \item For every finite set $X$, the component $C(X,\epsilon)$ contains a thick flat $G$-subtribe of~$\Fcal$.
        \item Every thick subtribe $\Fcal'$ of $\Fcal$ is concentrated at $\epsilon$, too.
    \end{enumerate}
\end{lemma}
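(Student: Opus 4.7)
The plan is to derive both assertions directly from the definition of concentration and, for (1), the removal lemma, Lemma \ref{l:removethin}. I would begin by recording the single observation that makes both parts work: a component $C(X,\epsilon)$ of $\Gamma - X$ is, by definition, an induced subgraph of $\Gamma$, so for any subgraph $H \subseteq \Gamma$, one has $H \subseteq C(X,\epsilon)$ if and only if $V(H) \subseteq V(C(X,\epsilon))$.

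For (1), I would fix a finite set $X$ and consider the flat subtribe $\Fcal_X = \{F_X : F \in \Fcal\}$ with $F_X = \{H \in F : H \not\subseteq C(X,\epsilon)\}$, which is thin by the hypothesis that $\Fcal$ is concentrated at $\epsilon$. Applying Lemma \ref{l:removethin} to $\Fcal_X$ (with the witnessing $\Psi$ and all $\varphi_F$ equal to inclusions, so that $\hat F = F_X$) yields a thick flat $G$-subtribe $\Fcal'' = \{F \setminus F_X : F \in \Fcal\}$ of $\Fcal$; by construction every member of $\Fcal''$ is contained in $C(X,\epsilon)$, which is exactly what is required.

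For (2), let $\Fcal'$ be a thick subtribe of $\Fcal$, witnessed by an injection $\Psi : \Fcal' \to \Fcal$ together with injections $\varphi_{F'} : F' \to \Psi(F')$ satisfying $V(H') \subseteq V(\varphi_{F'}(H'))$ for every $H' \in F'$. Fix a finite vertex set $X$ and choose $k \in \Nbb$ with $|F_X| \le k$ for all $F \in \Fcal$; such a $k$ exists since $\Fcal$ is concentrated at $\epsilon$. The key step is to check that $\varphi_{F'}$ restricts to an injection $F'_X \to \Psi(F')_X$: if $H' \in F'_X$, the opening observation gives $V(H') \not\subseteq V(C(X,\epsilon))$, which combined with $V(H') \subseteq V(\varphi_{F'}(H'))$ forces $V(\varphi_{F'}(H')) \not\subseteq V(C(X,\epsilon))$, so $\varphi_{F'}(H') \not\subseteq C(X,\epsilon)$ and hence $\varphi_{F'}(H') \in \Psi(F')_X$. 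This yields $|F'_X| \le |\Psi(F')_X| \le k$ for every $F' \in \Fcal'$, which says exactly that $\Fcal'_X$ is thin and therefore that $\Fcal'$ is concentrated at $\epsilon$.

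Both parts are essentially bookkeeping. The only subtle point, and the only place any obstacle could lurk, is that the definition of $G$-subtribe provides $V(H') \subseteq V(\varphi_{F'}(H'))$, which is precisely the direction needed to transport the property ``$H' \not\subseteq C(X,\epsilon)$'' \emph{upwards} from $F'$ to $\Psi(F')$ rather than the other way round; beyond carefully unwinding the definitions I do not anticipate any real difficulty.
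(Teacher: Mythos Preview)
Your proposal is correct and follows essentially the same approach as the paper: part (1) is the paper's argument verbatim, and for (2) the paper likewise uses the subtribe injections to bound $|F'_X|$ by $|\Psi(F')_X| \le k$, though it phrases this as $|\varphi_{F'}^{-1}(F_X)| \le k$ rather than spelling out that $\varphi_{F'}$ restricts to an injection $F'_X \to \Psi(F')_X$. Your version is in fact more explicit about the one subtle point, namely that the vertex-inclusion condition in the definition of subtribe is exactly what transports ``$H' \not\subseteq C(X,\epsilon)$'' upward.
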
 

\begin{proof}
    Let $X$ be a finite vertex set. By definition, if the $G$-tribe $\Fcal$ is concentrated at $\epsilon$, then $\Fcal$ is thick, and the subtribe $\Fcal_X$ consisting of the sets $F_X=\{H \in F: H \not\subseteq C(X,\epsilon)\}\subseteq F$ for $F \in \Fcal$ is a thin subtribe of $\Fcal$, 
    i.e.\ there exists $k \in \N$ such that $|F_X| \leq k$ for all $F_X \in \Fcal_X$.

    For $(1)$, observe that the $G$-tribe $\Fcal' = \{F\setminus F_X\colon F \in \Fcal\}$ is a thick flat subtribe of $\Fcal$ by Lemma \ref{l:removethin}, and all its members are contained in $C(X,\epsilon)$ by construction. 

    For $(2)$, observe that if $\Fcal'$ is a subtribe of $\Fcal$, then for every $F' \in \Fcal'$ there is an injection $\varphi_{F'} \colon F' \to F$ for some $F \in \Fcal$. 
    Therefore, $|\varphi_{F'}^{-1}(F_X)|\leq k$ for $F_X \subseteq F$ as defined above, and so only a thin subtribe of $\Fcal'$ is not contained in $C(X,\epsilon)$.
\end{proof}

\section{Countable subtrees}
\label{sec:countable-subtrees}

In this section we prove Theorem~\ref{t:countembed}. 
Let $S$ be a countable subtree of $T$. 
Our aim is to construct an {\em $S$-horde} $(Q_i \colon i \in \N)$ of disjoint suitable $S$-minors in $\Gamma$ inductively. 
By Lemma~\ref{l:concentrated}, we may assume without loss of generality that there are an end $\epsilon$ of $\Gamma$ and a thick $T$-tribe $\mathcal{F}$ concentrated at $\epsilon$. 

In order to ensure that we can continue the construction at each stage, we will require the existence of additional structure for each $n$. 
But the details of what additional structure we use will vary depending on how many vertices dominate $\epsilon$. 
So, after a common step of preprocessing, in Section~\ref{sec7.1}, the proof of Theorem~\ref{t:countembed} splits into two cases according to whether the number of $\epsilon$-dominating vertices in $\Gamma$ is finite (Section~\ref{sec7.2}) or infinite (Section~\ref{sec7.3}). 

\subsection{Preprocessing}
\label{sec7.1}

We begin by picking a root $v$ for $S$, and also consider $T$ as a rooted tree with root $v$. Let $V_{\infty}(S)$ be the set of vertices of infinite degree in $S$.

\begin{definition}
    Given $S$ and $T$ as above, define a spanning locally finite forest $S^*\subseteq S$ by
    \[S^* := S \setminus \bigcup_{t \in V_\infty(S)} \{tt_i \colon t_i \in N^+(t), i > N_t \},\] 
    where $N_t$ is as in Corollary~\ref{lem_rotatearound}. 
    We will also consider every component of $S^*$ as a rooted tree given by the induced tree order from $T$.
\end{definition}

\begin{definition}
    \label{def_extensionedge}
    An edge $e$ of $S^*$ is an {\em extension edge} if there is a ray in $S^*$ starting at $e^+$ which displays self-similarity of $T$. 
    For each extension edge $e$ we fix one such a ray $R_e$. Write $Ext(S^*) \subseteq E(S^*)$ for the set of extension edges.
\end{definition}

Consider the forest $S^* - Ext(S^*)$ obtained from $S^*$ by removing all extension edges. 
Since every ray in $S^*$ must contain an extension edge by Corollary~\ref{lem_pushalong}, each component of $S^* - Ext(S^*)$ is a locally finite rayless tree and so is finite (this argument is inspired by \cite[Lemma~2]{A79}). 
We enumerate the components of $S^* - Ext(S^*)$ as $S^*_0,S^*_1,\ldots$ in such a way that for every $n \geq 0$, the set 
\[
    S_{n} := S\left[\bigcup_{i \leq n } V(S^*_i) \right]
\]
is a finite subtree of $S$ containing the root $r$. 
Let us write $\partial (S_n) =E_{S^*}(S_n,S^* \setminus S_n)$, and note that $\partial (S_n) \subseteq Ext(S^*)$. We make the following definitions:

\begin{itemize}
    \item For a given $T$-tribe $\Fcal$ and ray $R$ of $T$, we say that $R$ {\em converges to $\epsilon$ according to $\Fcal$} if for all members $H$ of $\Fcal$ the ray $H(R)$ is in $\epsilon$. 
        We say that $R$ is {\em cut from $\epsilon$ according to $\Fcal$} if for all members $H$ of $\Fcal$ the ray $H(R)$ is not in $\epsilon$. 
        Finally we say that $\Fcal$ {\em determines whether $R$ converges to $\epsilon$} if either $R$ converges to $\epsilon$ according to $\Fcal$ or $R$ is cut from $\epsilon$ according to $\Fcal$. 
      \item Similarly, for a given $T$-tribe $\Fcal$ and vertex $t$ of $T$, we say that $t$ {\em dominates $\epsilon$ according to $\Fcal$} if for all members $H$ of $\Fcal$ the vertex $H(t)$ dominates $\epsilon$. 
          We say that $t$ is {\em cut from $\epsilon$ according to $\Fcal$} if for all members $H$ of $\Fcal$ the vertex $H(t)$ does not dominate $\epsilon$. 
          Finally we say that $\Fcal$ {\em determines whether $t$ dominates $\epsilon$} if either $t$ dominates $\epsilon$ according to $\Fcal$ or $t$ is cut from $\epsilon$ according to $\Fcal$.
    \item Given $n \in \N$, we say a thick $T$-tribe $\Fcal$ \emph{agrees about $\partial (S_n)$} if for each extension edge $e \in \partial (S_n)$, it determines whether $R_e$ converges to $\epsilon$. 
        We say that it \emph{agrees about $V(S_n)$} if for each vertex $t$ of $S_n$, it determines whether $t$ dominates $\epsilon$. 
    \item Since $\partial (S_n)$ and $V(S_n)$ are finite for all $n$, it follows from Lemma~\ref{Lem_finitechoice} that  given some $n \in \N$, any thick $T$-tribe has a flat thick $T$-subtribe $\Fcal$ such that $\Fcal$ agrees about $\partial (S_n)$ and $V(S_n)$. 
        Under these circumstances we set
        \begin{align*}
            \partial_\epsilon (S_n) &: = \{e \in \partial (S_n) \colon R_e \text{ converges to $\epsilon$ according to } \Fcal \}\,, \\
            \partial_{\neg \epsilon} (S_n)  &: =  \{e \in \partial (S_n) \colon R_e \text{ is cut from  $\epsilon$ according to } \Fcal \} \,, \\
             V_{\epsilon}(S_n) &:= \{t \in V(S_n) \colon t \text{ dominates $\epsilon$ according to } \Fcal \}\,, \text{ and} \\
             V_{\neg\epsilon}(S_n) &:= \{t \in V(S_n) \colon t \text{ is cut from $\epsilon$ according to } \Fcal \}\, .
        \end{align*}
     \item Also, under these circumstances, let us write $S^{\neg \epsilon}_n$ for the component of the forest $S - \partial_\epsilon (S_n) - \{e \in E_S(S_n,S \setminus S_n) \colon e^- \in V_{\epsilon}(S_n)\}$ containing the root of $S$. 
         Note that $S_n \subseteq S^{\neg \epsilon}_n$.
\end{itemize}

The following lemma contains a large part of the work needed for our inductive construction.

\begin{lemma}[$T$-tribe refinement lemma]
    \label{lem_increasingFnegEpsilon}
    Suppose we have a thick $T$-tribe $\Fcal_n$ concentrated at $\epsilon$ which agrees about $\partial (S_n)$ and $V(S_n)$ for some $n \in \N$. Let $f$ denote the unique edge from $S_n$ to $S_{n+1} \setminus S_n$. 
    Then there is a thick $T$-tribe $\Fcal_{n+1}$ concentrated at $\epsilon$ with the following properties:
    \begin{enumerate}[label=(\roman*)]
        \item \label{itemagree} $\Fcal_{n+1}$ agrees about $\partial (S_{n+1})$ and $V(S_{n+1})$.
        \item \label{itemconsistentagree} $\Fcal_{n+1} \cup \Fcal_n$ agree about $\partial (S_n) \setminus \{f\}$ and $V(S_n)$.
        \item \label{itemnested} $S^{\neg \epsilon}_{n+1} \supseteq S^{\neg \epsilon}_n$.
        \item \label{itemSeparation} For all $H \in \Fcal_{n+1}$ there is a finite $X\subseteq \Gamma$ such that $H(S^{\neg \epsilon}_{n+1}) \cap \left(X \cup C_\Gamma(X,\epsilon) \right) = H(V_\epsilon(S_{n+1}))$.
    \end{enumerate}
    Moreover, if $f \in \partial_{\epsilon} (S_n)$, and $R_f= v_0v_1v_2 \ldots \subseteq S^*$ (with $v_0=f^+$) denotes the ray displaying self-similarity of $T$ at $f$, then we may additionally assume: 
    \begin{enumerate}[label=(\roman*),resume]
        \item \label{consistentpushingalong} For every $H \in \Fcal_{n+1}$ and every $k \in \N$, there is $H' \in \Fcal_{n+1}$ with
        \begin{itemize}
            \item $H' \subseteq_r H$ 
            \item $H' (S_n) = H(S_n)$, 
            \item \label{b}$H'(T_{v_0}) \subseteq_r H(T_{v_k})$, and
            \item $H'(R_f) \subseteq H(R_f)$.
        \end{itemize}
    \end{enumerate}
\end{lemma}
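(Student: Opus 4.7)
The plan is to construct $\Fcal_{n+1}$ in two stages: first extract a refinement $\Fcal'$ of $\Fcal_n$ that settles the new boundary data (handling \ref{itemagree}--\ref{itemSeparation}), and then, in case $f \in \partial_\epsilon(S_n)$, enlarge $\Fcal'$ by closing it under \emph{pushing-along} re-embeddings driven by the self-similarity of $R_f$ (for \ref{consistentpushingalong}). The set $\partial(S_{n+1}) \setminus \partial(S_n)$ consists of the extension edges newly exposed on the boundary of $S^*_{n+1}$, and $V(S_{n+1}) \setminus V(S_n) = V(S^*_{n+1})$; both are finite because $S^*_{n+1}$ is a finite component of $S^* - Ext(S^*)$. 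Applying the pigeon-hole principle for thick $T$-tribes (Lemma~\ref{Lem_finitechoice}) finitely many times --- colouring each member $H$ by whether $H(R_e)$ is in $\epsilon$ for each new extension edge $e$, and by whether $H(t)$ dominates $\epsilon$ for each new vertex $t$ --- yields a thick flat $T$-subtribe $\Fcal' \subseteq \Fcal_n$ agreeing about $\partial(S_{n+1})$ and $V(S_{n+1})$, which is~\ref{itemagree}. Since $\Fcal'$ is a subtribe of $\Fcal_n$, for every $e \in \partial(S_n) \setminus \{f\} \subseteq \partial(S_{n+1})$ and every $t \in V(S_n) \subseteq V(S_{n+1})$ the determination made by $\Fcal'$ coincides with that made by $\Fcal_n$, giving~\ref{itemconsistentagree}; concentration at $\epsilon$ is inherited via Lemma~\ref{lem_subtribesinheritconcentration}(2).

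Property~\ref{itemnested} is then a direct check: the set of edges removed from $S$ to define $S^{\neg\epsilon}_n$ is contained, except for $f$, in the corresponding removal set for $S^{\neg\epsilon}_{n+1}$; but $f$ has become an interior edge of $S_{n+1}$, so its return to the graph cannot disconnect the root from any vertex of $S^{\neg\epsilon}_n$, whence $S^{\neg\epsilon}_n \subseteq S^{\neg\epsilon}_{n+1}$. For~\ref{itemSeparation}, fix $H \in \Fcal'$ and assemble $X$ as the union of $H(V_\epsilon(S_{n+1}))$ with finite separators in $\Gamma$ between $H(R_e)$ and $\epsilon$ for each $e \in \partial_{\neg\epsilon}(S_{n+1})$ (which exist because $H(R_e) \notin \epsilon$) and between $H(t)$ and $\epsilon$ for each non-dominating vertex $t \in V_{\neg\epsilon}(S_{n+1})$. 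The delicate point is to verify that no further infinite escape of $H(S^{\neg\epsilon}_{n+1}) \setminus H(V_\epsilon(S_{n+1}))$ towards $\epsilon$ bypasses these finitely many separators; for this I plan to invoke Corollary~\ref{lem_pushalong} to show that any upward ray in $S^{\neg\epsilon}_{n+1}$ has a tail displaying self-similarity of $T$, and is therefore comparable to some chosen $R_e$ at an extension edge on the $S$-boundary of $S^{\neg\epsilon}_{n+1}$. I expect this reduction, together with the accounting for the infinite-degree vertices of $V_\infty(S)$ that sit inside $S^{\neg\epsilon}_{n+1} \setminus V(S_{n+1})$, to be the main technical obstacle of the proof.

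Finally, for~\ref{consistentpushingalong}, assume $f \in \partial_\epsilon(S_n)$, so that $R_f = v_0 v_1 v_2 \ldots$ displays self-similarity of $T$ at $f$. Given $H \in \Fcal'$ and $k \in \Nbb$, Definition~\ref{def_selfsimilar} supplies some $m \geq k$ together with a rooted subdivision $H^*$ of $T_{v_0}$ such that $H^* \subseteq_r T_{v_m}$ and $H^*(R_f) \subseteq R_f$. I define $H'_{H,k}$ on $T$ by letting it coincide with $H$ at every vertex of $T$ not below $v_0$ in the tree order, with $H \circ H^*$ on $T_{v_0}$, and by mapping the edge $f$ to the concatenation of $H(f)$ with the finite segment of $H(R_f)$ running from $H(v_0)$ to $H(v_m)$. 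This yields $H'_{H,k} \subseteq_r H$, $H'_{H,k}(S_n) = H(S_n)$, $H'_{H,k}(T_{v_0}) \subseteq_r H(T_{v_m}) \subseteq_r H(T_{v_k})$, and $H'_{H,k}(R_f) \subseteq H(R_f)$. To guarantee that each such $H'_{H,k}$ --- and more generally each iterated push-along of a member of $\Fcal'$ --- lies in the final tribe, I set $\Fcal_{n+1}$ to be the closure of $\Fcal'$ under this pushing-along operation. Every member of $\Fcal_{n+1}$ remains a subgraph of some member of $\Fcal'$, so disjointness within each layer, thickness, and concentration at $\epsilon$ are all preserved.
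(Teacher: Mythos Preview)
Your argument for \ref{itemnested} contains a genuine error. Consider the case $f \in \partial_{\neg\epsilon}(S_n)$ (with $f^- \notin V_\epsilon(S_n)$). Then $f$ is \emph{not} removed when forming $S^{\neg\epsilon}_n$, so the entire subtree $S_{f^+}$ of $S$ lies in $S^{\neg\epsilon}_n$. The new extension edges $g \in \partial(S_{n+1}) \setminus \partial(S_n)$ all lie above $f^+$, and there is no a~priori reason why $H(R_g)$ should fail to tend to $\epsilon$ for members $H$ of your $\Fcal'$: the fact that $H(R_f)$ is cut from $\epsilon$ says nothing about rays starting in other branches of $T_{f^+}$. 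If after your pigeon-hole step some such $g$ lands in $\partial_\epsilon(S_{n+1})$, then $S_{g^+}$ is excised from $S^{\neg\epsilon}_{n+1}$ even though $S_{g^+} \subseteq S^{\neg\epsilon}_n$, and \ref{itemnested} fails outright. (Your stated inclusion of removal sets, even if correct, points the wrong way: removing \emph{more} edges makes the root component \emph{smaller}.) The paper fixes this not by a combinatorial check but by \emph{re-embedding}: for each $H$ and each $e \in \partial_{\neg\epsilon}(S_n)$ it uses self-similarity of $R_e$ to push $H(T_{e^+})$ along $H(R_e)$ until it lies outside $C(X_H,\epsilon)$ for some finite $X_H$, thereby \emph{forcing} every new $g$ above $e$ into $\partial_{\neg\epsilon}(S_{n+1})$. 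This modification is essential; without it \ref{itemnested} is simply false in general.

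There is a second gap in your handling of \ref{consistentpushingalong}. Closing $\Fcal'$ under push-alongs can destroy \ref{itemagree}: for a new edge $g \in \partial(S_{n+1}) \setminus \partial(S_n)$, the ray $H'_{H,k}(R_g) = H(H^*(R_g))$ is the $H$-image of a ray in $T$ that depends on the particular self-similarity embedding $H^*$ chosen for that $k$, and nothing guarantees these images tend to $\epsilon$ consistently as $k$ varies. The paper proceeds in the opposite order: \emph{first} it forms, for each $H$, the whole sequence of push-alongs $H^{(k)}$, uses pigeonhole to select an infinite $K_H \subseteq \N$ along which the $H^{(k)}$ already agree about $\partial(S_{n+1})$, builds the tribe from these, and only \emph{then} passes to a thick flat subtribe agreeing about $\partial(S_{n+1})$ and $V(S_{n+1})$. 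Finally, your plan for \ref{itemSeparation} via Corollary~\ref{lem_pushalong} is not a workable outline: $S^{\neg\epsilon}_{n+1}$ is in general infinite, and no separator argument alone will bound $H(S^{\neg\epsilon}_{n+1})$ away from $\epsilon$. The paper again \emph{modifies} $H$, this time using Corollary~\ref{lem_rotatearound} to re-embed, for each $t \in V_{\neg\epsilon}(S_{n+1})$, the infinitely many surplus children of $t$ into those whose $H$-images are already separated from $\epsilon$.
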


\begin{proof}
    Concerning \ref{consistentpushingalong}, if $f \in \partial_{\epsilon} (S_n)$ recall that according to Definition~\ref{def_extensionedge}, the ray $R_f$ satisfies that for all $k \in \N$ we have $T_{v_0} \leq_r T_{v_k}$ such that $R_f$ gets embedded into itself. 
    In particular, there is a subtree $\hat{T}_1$ of $T_{v_1}$ which is a rooted subdivision of $T_{v_0}$ with $\hat{T}_1(R_f) \subseteq R_f$, considering $\hat{T}_1$ as a rooted tree given by the tree order in $T_{v_1}$. If we define recursively for each $k \in \N$ $\hat{T}_k = \hat{T}_{k-1}(\hat{T}_1)$ then it is clear that $(\hat{T}_k \colon k \in \N)$ is a family of rooted subdivisions of $T_{v_0}$ such that for each $k\in \N$
    \begin{itemize}
    \item $\hat{T}_k \subseteq T_{v_k}$;
    \item $\hat{T}_{k} \supseteq \hat{T}_{k+1}$;
    \item $\hat{T}_k(R_f) \subseteq R_f$
    \end{itemize}
    
    Hence, for every subdivision $H$ of $T$ with $H \in \bigcup \Fcal_n$ and every $k \in \N$, the subgraph $H(\hat{T}_k)$ is also a rooted subdivision of $T_{v_0}$. 
    Let us construct a subdivision $H^{(k)}$ of~$T$ by letting $H^{(k)}$ be the minimal subtree of $H$ containing $H(T \setminus T_{v_0}) \cup H(\hat{T}_k)$, where $H^{(k)}(T \setminus T_{v_0})=H(T \setminus T_{v_0})$ and ${H^{(k)}(T_{v_0})=H(\hat{T}_k)}$.
    Note that 
    \[
    H^{(k)}(T_{v_0}) = H(\hat{T}_k) \subseteq_r H^{(k-1)}(T_{v_0}) = H(\hat{T}_{k-1}) \subseteq_r  \ldots \subseteq_r  H(T_{v_k}). 
    \]
    
    In particular, for every subdivision $H \in \bigcup \Fcal_n$ of $T$ and every $k \in \N$, there is a subdivision $H^{(k)} \subseteq H$ of $T$ such that $H^{(k)} (S^{\neg \epsilon}_n) = H(S^{\neg \epsilon}_n)$, $H^{(k)} (T_{v_0}) \subseteq_r H(T_{v_k})$, and $H^{(k)}(R_f) \subseteq H(R_f)$.
    By the pigeon hole principle, there is an infinite index set $K_H = \{k^H_1,k^H_2,\ldots \} \subseteq \N$ such that $\{\{H^{(k)}\}\colon k \in K_H \}$ agrees about $\partial (S_{n+1})$. Consider the thick subtribe $\Fcal'_n = \{F'_i \colon F \in \Fcal_n, i \in \N\}$ of $\Fcal_n$ with 
    \[
        (\dagger) \quad \quad F'_i := \{ H^{\left(k^H_i\right)} \colon H \in F \}.
    \] %
    Observe that $\Fcal'_n \cup \Fcal_n$ still agrees about $\partial (S_n)$ and $V(S_n)$. (If $f \in \partial_{\neg \epsilon} (S_n)$, then skip this part and simply let $\Fcal'_n:=\Fcal_n$.)

    Concerning \ref{itemnested}, observe that for every $H \in \bigcup \Fcal'_n$, since the rays $H(R_e)$ for $e \in \partial_{\neg \epsilon} (S_n)$ do not tend to $\epsilon$, there is a finite vertex set $X_H$ such that $H(R_e) \cap C(X_H,\epsilon) = \emptyset$ for all $e \in \partial_{\neg \epsilon} (S_n)$. 
    Furthermore, since $X_H$ is finite, for each such extension edge $e$ there exists $x_e \in R_e$ such that 
    \[H(T_{x_e}) \cap C(X_H,\epsilon) = \emptyset.\]
    By definition of extension edges, cf.\ Definition~\ref{def_extensionedge}, for each $e \in \partial_{\neg \epsilon} (S_n)$ there is a rooted embedding of $T_{e^+}$ into $H(T_{x_e})$. 
    Hence, there is a subdivision $\tilde{H}$ of $T$ with $\tilde{H} \leq H$ and $\tilde{H}(S_n) = H(S_n)$ such that $\tilde{H}(T_{e^+}) \subseteq H(T_{x_e})$ for each $e \in \partial_{\neg \epsilon} (S_n)$.

    Note that if $e \in \partial_{\neg \epsilon} (S_n)$ and $g$ is an extension edge with $e \leq g \in \partial (S_{n+1}) \setminus \partial (S_n)$, then $\tilde{H}(R_g) \subseteq \tilde{H}(S_{e^+} ) \subseteq H(S_{x_e})$, and so
    \[
        (\ddagger) \quad \quad \tilde{H}(R_g) \text{ doesn't tend to } \epsilon. 
    \]
    Define $\tilde{\Fcal}_n$ to be the thick $T$-subtribe of $\Fcal'_n$ consisting of the $\tilde{H}$ for every $H$ in $\bigcup \Fcal'_n$.
    
    Now use Lemma~\ref{Lem_finitechoice} to chose a maximal thick flat subtribe $\Fcal^*_{n}$ of $\tilde{\Fcal}_n$ which agrees about $\partial (S_{n+1})$ and $V(S_{n+1})$, so it satisfies \ref{itemagree} and \ref{itemconsistentagree}. 
    By $(\ddagger)$, the tribe $\Fcal^*_{n}$ satisfies \ref{itemnested}, and by maximality and $(\dagger)$, it satisfies \ref{consistentpushingalong}.

    In our last step, we now arrange for \ref{itemSeparation} while preserving all other properties. 
    For each $H \in \bigcup \Fcal^*_{n}$. 
    Since $H(S_{n+1})$ is finite, we may find a finite separator $Y_H$ such that
    \[
        H(S_{n+1}) \cap \left( Y_H \cup C(Y_H,\epsilon)\right) = H(V_\epsilon(S_{n+1})).
    \]
    Since $Y_H$ is finite, for every vertex $t \in V_{\neg \epsilon}(S_{n+1})$, say with $N^+(t) = (t_i)_{i\in \mathbb{N}}$, there exists $n_t \in \N$ such that $C(Y_H,\epsilon) \cap H(T_{t_j})  = \emptyset$ for all $j \geq n_t$. 
Using Corollary~\ref{lem_rotatearound}, for every such $t$ there is a rooted embedding
    \[ 
        \{t\} \cup \bigcup_{j > N_t} T_{t_j} \leq_r \{t\} \cup \bigcup_{j > n_t} T_{t_j}.
    \]
    fixing the root $t$. 
    Hence there is a subdivision $H$' of $T$ with $H' \leq H$ such that $H'(T \setminus S) = H(T \setminus S)$ and for every $t \in V_{\neg \epsilon}(S_{n+1})$
    \[
        H'\left[\{t\} \cup \bigcup_{j > N_t} T_{t_j} \right] \cap C(Y_H,\epsilon) = \emptyset.
    \]
    Moreover, note that by construction of $\tilde{F}_n$, every such $H'$ automatically satisfies that
    \[
        H(S_{e^+}) \cap C(X_H \cup Y_H,\epsilon) = \emptyset
    \]
    for all $e \in \partial_{\neg \epsilon} (S_{n+1})$. Let $\Fcal_{n+1}$ consist of the set of $H'$ as defined above for all $H \in \Fcal^*_n$. 
    Then $X_H \cup Y_H$ is a finite separator witnessing that  $\Fcal_{n+1}$ satisfies \ref{itemSeparation}.
\end{proof}

\subsection{Only finitely many vertices dominate \texorpdfstring{$\epsilon$}{\textepsilon}}
\label{sec7.2}

We first note as in Lemma~\ref{l:concentrated}, that for every finite vertex set $X \subseteq V(\Gamma)$ only a thin subtribe of $\Fcal$ can meet $X$, so a thick subtribe is contained in the graph $\Gamma-X$. 
By removing the set of vertices dominating $\epsilon$, we may therefore assume without loss of generality that no vertex of $\Gamma$ dominates $\epsilon$.

\begin{definition}[Bounder, extender]
    Suppose that some thick $T$-tribe $\Fcal$ which is concentrated at $\epsilon$ agrees about $S_n$ for some given $n \in \N$, and $Q_1^n, Q_2^n, \ldots ,Q_n^n$ are disjoint subdivisions of $S^{\neg \epsilon}_n$ (note, $S^{\neg \epsilon}_n$ depends on $\Fcal$).
    \begin{itemize}
        \item A {\em bounder} for the $(Q^n_i \colon i \in [n])$ is a finite set $X$ of vertices in $\Gamma$ separating all the $Q_i$ from $\epsilon$, i.e.\ such that 
            \[C(X,\epsilon) \cap \bigcup_{i=1}^n Q^n_i  = \emptyset. \]
        \item An {\em extender} for the $(Q^n_i \colon i \in [n])$ is a family $\mathcal{E}_n = ( E^n_{e,i} \colon e \in \partial_\epsilon (S_n), i \in [n])$ of rays in $\Gamma$ tending to $\epsilon$ which are disjoint from each other and also from each $Q^n_i$ except at their initial vertices, and where the start vertex of $E^n_{e,i}$ is $Q^n_i(e^-)$.
    \end{itemize}
\end{definition}

To prove Theorem~\ref{t:countembed}, we now assume inductively that for some $n \in \mathbb{N}$, with $r:= \lfloor n/2 \rfloor$ and $s:= \lfloor (n+1)/2 \rfloor$ we have:
\begin{enumerate}
    \item A thick $T$-tribe $\mathcal{F}_{r}$ in $\Gamma$ concentrated at $\epsilon$ which agrees about $\partial \left(S_{r}\right)$, with a boundary $\partial_\epsilon \left(S_{r}\right)$ such that $S^{\neg \epsilon}_{r-1} \subseteq S^{\neg \epsilon}_{r}$.\footnote{Note that since $\epsilon$ is undominated, every thick $T$-tribe agrees about the fact that $V_\epsilon(S_i) = \emptyset$ for all $i \in \N$.}
    \item a family $( Q_i^n \colon i \in [s] )$ of $s$ pairwise disjoint $T$-suitable subdivisions of $S^{\neg \epsilon}_{r}$ in $\Gamma$ with $Q^n_i(S^{\neg \epsilon}_{r-1}) = Q^{n-1}_i$ for all $i \leq s-1$,
    \item a bounder $X_n$ for the $(Q^n_i \colon i \in [s])$, and 
    \item an extender $\mathcal{E}_n = ( E^n_{e,i} \colon e \in \partial_\epsilon \left(S^{\neg \epsilon}_{r}\right), \, i \in [s] )$ for the $(Q^n_i \colon i \in [s])$.
\end{enumerate}


The base case $n = 0$ it easy, as we simply may choose $\Fcal_0 \leq_r \Fcal$ to be any thick $T$-subtribe in $\Gamma$  which agrees about $\partial (S_0)$, and let all other objects be empty.

So, let us assume that our construction has proceeded to step $n \geq 0$. Our next task splits into two parts: 
First, if $n=2k-1$ is odd, we extend the already existing $k$ subdivisions $(Q^n_i \colon i \in [k])$ of $S^{\neg \epsilon}_{k-1}$ to subdivisions $(Q^{n+1}_i \colon i \in [k])$ of $S^{\neg \epsilon}_{k}$. 
And secondly, if $n=2k$ is even, we construct a further disjoint copy $Q^{n+1}_{k+1}$ of $S^{\neg \epsilon}_{k}$.

\textbf{Construction part 1: $n=2k-1$ is odd.} 
By assumption, $\mathcal{F}_{k-1}$ agrees about $\partial (S_{k-1})$. Let $f$ denote the unique edge from $S_{k-1}$ to $S_{k} \setminus S_{k-1}$. We first apply Lemma~\ref{lem_increasingFnegEpsilon} to $\Fcal_{k-1}$ in order to find a thick $T$-tribe $\Fcal_{k}$ concentrated at $\epsilon$ satisfying properties \ref{itemagree}--\ref{consistentpushingalong}. 
In particular, $\Fcal_{k}$ agrees about $\partial (S_{k})$ and $S^{\neg \epsilon}_{k-1} \subseteq S^{\neg \epsilon}_{k}$

We first note that if $f \notin \partial_{\epsilon} (S_{k-1})$, then $S^{\neg \epsilon}_{k-1} = S^{\neg \epsilon}_{k}$, and we can simply take $Q^{n+1}_i := Q^{n}_i$ for all $i \in [k]$, $\mathcal{E}_{n+1} := \mathcal{E}_n$ and $X_{n+1} := X_n$. 

Otherwise, we have $f \in \partial_{\epsilon} (S_{k-1})$. By Lemma~\ref{lem_subtribesinheritconcentration}(2) $\Fcal_{k}$ is concentrated at $\epsilon$, and so we may pick a collection $\{H_1,\ldots,H_N\}$ of disjoint subdivisions of $T$ from some $F \in \mathcal{F}_{k}$, all of which are contained in $C(X_n ,\epsilon)$, where $N=|\mathcal{E}_n|$. 
By Lemma~\ref{l:weaklink} there is some linkage $\mathcal{P} \subseteq C(X_n ,\epsilon)$ from 

\[ 
    \mathcal{E}_n \, \text{ to } \; (H_j(R_f) \colon j \in [N] ), 
\]
which is after $X_n$. Let us suppose that the linkage $\mathcal{P}$ joins a vertex $x_{e,i} \in E^n_{e,i}$ to $y_{\sigma(e,i)} \in H_{\sigma(e,i)}(R_f)$ via a path $P_{e,i} \in \mathcal{P}$. 
Let $z_{\sigma(e,i)}$ be a vertex in $R_f$ such that $y_{\sigma(e,i)} \leq H_{\sigma(e,i)}(z_{\sigma(e,i)})$ in the tree order on $H_{\sigma(e,i)}(T)$.

By property \ref{consistentpushingalong} of $\Fcal_{k}$ in Lemma~\ref{lem_increasingFnegEpsilon}, we may assume without loss of generality that for each $H_j$ there is a another member $H'_j \subseteq H_j$ of $\Fcal_k$ such that $H'_j(T_{f^+}) \subseteq_r H_j(T_{z_j})$. Let $\hat{P}_j \subseteq H'_j$ denote the path from $H_j(y_j)$ to $H'_j(f^+)$. 

Now for each $i \in [k]$, define
\[
    Q^{n+1}_i = Q^n_i \cup E^n_{f,i} x_{f,i} P_{f,i} y_{\sigma(f,i)} \hat{P}_{\sigma(f,i)}  \cup H'_{\sigma(f,i)}(S^{\neg \epsilon}_{k} \setminus S^{\neg \epsilon}_{k-1}).
\]
By construction, each $Q^{n+1}_i$ is a $T$-suitable subdivision of $S^{\neg \epsilon}_{k}$.

By Lemma~\ref{lem_increasingFnegEpsilon}\ref{itemSeparation} we may find a finite set $X_{n+1} \subseteq \Gamma$ with $X_{n} \subseteq X_{n+1}$ such that 
\[
    C(X_{n+1},\epsilon)  \cap \big(\bigcup_{i \in [k]} Q^{n+1}_i \big) = \emptyset.
\]
This set $X_{n+1}$ will be our bounder.

Define an extender $\mathcal{E}_{n+1} = ( E^{n+1}_{e,i} \colon e \in \partial_\epsilon (S_{k}), i \in [k])$ for the $Q^{n+1}_i$ as follows:
\begin{itemize}
    \item For $e \in \partial_\epsilon (S_{k-1}) \setminus \{f\}$, let $ E^{n+1}_{e,i} := E^n_{e,i} x_{e,i} P_{e,i} y_{\sigma(e,i)} H_{\sigma(e,i)}(R_f)$.
    \item For $e \in \partial_\epsilon (S_{k}) \setminus \partial (S_{k-1})$, let $E^{n+1}_{e,i} := H'_{\sigma(e,i)}(R_e)$. 
\end{itemize}

Since each $H_{\sigma(e,i)},H'_{\sigma(e,i)} \in \bigcup \Fcal_k$, and $\Fcal_k$ determines that $R_f$ converges to $\epsilon$, these are indeed $\epsilon$ rays. 
Furthermore, since $H'_{\sigma(e,i)} \subseteq H_{\sigma(e,i)}$ and $\{H_1,\ldots,H_N\}$ are disjoint, it follows that the rays are disjoint.

\textbf{Construction part 2: $n=2k$ is even.} 
If $\partial_\epsilon (S_k) = \emptyset$, then $S^{\neg \epsilon}_k = S$, and so picking any element $Q^{n+1}_{k+1}$ from $\Fcal_k$ with $Q^{n+1}_{k+1} \subseteq C(X_n,\epsilon)$ gives us a further copy of $S$ disjoint from all the previous ones. 
Using Lemma~\ref{lem_increasingFnegEpsilon}(\ref{itemSeparation}), there is a suitable bounder $X_{n+1}\supseteq X_n$ for $Q^{n+1}_{k+1}$, and we are done. 
Otherwise, pick $e_0 \in \partial_\epsilon (S_n)$ arbitrary. 

Since $\Fcal_{k}$ is concentrated at $\epsilon$, we may pick a collection $\{H_1,\ldots,H_N\}$ of disjoint subdivisions of $T$ from $\mathcal{F}_{k}$ all contained in $C(X_n ,\epsilon)$, where $N$ is large enough so that we may apply Lemma~\ref{l:link} to find a linkage $\mathcal{P} \subseteq C(X_n ,\epsilon)$ from 
\[ 
    \mathcal{E}_n \, \text{ to } \; (H_i(R_{e_0}) \colon i \in [N] ), 
\]
after $X_n$, avoiding say $H_1$. 
Let us suppose the linkage $\mathcal{P}$ joins a vertex $x_{e,i} \in E^n_{e,i}$ to $y_{\sigma(e,i)} \in H_{\sigma(e,i)}(R_{e_0})$ via a path $P_{e,i} \in \mathcal{P}$. 
Define 
\[
    Q^{n+1}_{k+1} = H_1(S^{\neg \epsilon}_k).
\]
Note that $Q^{n+1}_{k+1}$ is a $T$-suitable subdivision of $S^{\neg \epsilon}_k$.
 
By Lemma~\ref{lem_increasingFnegEpsilon}(\ref{itemSeparation}) there is a finite set $X_{n+1} \subseteq \Gamma$ with $X_{n} \subseteq X_{n+1}$ such that 
$C(X_{n+1},\epsilon)  \cap Q^{n+1}_{k+1} = \emptyset$. 
This set $X_{n+1}$ will be our new bounder.

Define the extender $\mathcal{E}_{n+1} = ( E^{n+1}_{e,i} \colon e \in \partial_\epsilon (S_{k+1}), i \in [k+1] )$ of $\epsilon$-rays as follows:
\begin{itemize}
    \item For $i \in [k]$, let $ E^{n+1}_{e,i} := E^n_{e,i} x_{e,i} P_{e,i} y_{\sigma(e,i)} H_{\sigma(e,i)}(R_{e_0})$.
    \item For $i = k+1$, let $E^{n+1}_{e,k+1} := H_1(R_e)$ for all $e \in \partial_\epsilon (S_{k+1})$.
\end{itemize}

Once the construction is complete, let us define $H_i := \bigcup_{n \geq 2i-1} Q^n_i$.

Since $\bigcup_{n \in \N} S^{\neg \epsilon}_n =S$, and due to the extension property (2), the collection $(H_i)_{i \in \N}$ is an $S$-horde. \hfill 
\qed

We remark that our construction so far suffices to give a complete proof that countable trees are $\leq$-ubiquitous. 
Indeed, it is well-known that an end of $\Gamma$ is dominated by infinitely many distinct vertices if and only if $\Gamma$ contains a subdivision of $K_{\aleph_0}$ \cite[Exercise 19, Chapter 8]{D16}, in which case proving ubiquity becomes trivial:

\begin{lemma}
    For any countable graph $G$, we have $\aleph_0 \cdot G \subseteq K_{\aleph_0}$.
\end{lemma}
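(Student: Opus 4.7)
The plan is to exploit the fact that the complete graph $K_{\aleph_0}$ is \emph{universal} for countable graphs: any countable graph embeds as a subgraph in a prescribed countable vertex set, simply because every pair of vertices is already joined by an edge. All that remains is to find $\aleph_0$ many disjoint countable vertex sets in $V(K_{\aleph_0})$.

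More concretely, I would first fix a partition $V(K_{\aleph_0}) = \bigsqcup_{i \in \N} A_i$ of the vertex set into countably many infinite blocks $A_i$; such a partition exists because $\aleph_0 \cdot \aleph_0 = \aleph_0$. Since each $A_i$ is countably infinite, the induced subgraph $K_{\aleph_0}[A_i]$ is itself isomorphic to $K_{\aleph_0}$. Next, for each $i$ I would choose an arbitrary injection $\iota_i \colon V(G) \to A_i$; writing $G_i$ for the subgraph of $K_{\aleph_0}[A_i]$ with vertex set $\iota_i(V(G))$ and edge set $\{\iota_i(u)\iota_i(v) \colon uv \in E(G)\}$, the map $\iota_i$ is a graph isomorphism from $G$ onto $G_i$, since every required edge $\iota_i(u)\iota_i(v)$ is automatically present in $K_{\aleph_0}[A_i]$.

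Finally, since the blocks $A_i$ are pairwise disjoint, the subgraphs $G_i$ are pairwise vertex disjoint, and hence $\bigsqcup_{i \in \N} G_i$ is a subgraph of $K_{\aleph_0}$ isomorphic to $\aleph_0 \cdot G$. There is no serious obstacle here: the argument is genuinely immediate once one observes that $K_{\aleph_0}$ contains every countable graph as a spanning-or-smaller subgraph and that $\aleph_0$ splits into $\aleph_0$ infinite pieces.
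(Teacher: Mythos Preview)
Your proof is correct and follows essentially the same approach as the paper: partition $V(K_{\aleph_0})$ into countably many infinite parts, observe that each part induces a copy of $K_{\aleph_0}$, and embed $G$ into each part. The paper phrases this as $\aleph_0\cdot G \subseteq \aleph_0\cdot K_{\aleph_0} \subseteq K_{\aleph_0}$, but the content is identical.
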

\begin{proof}
    By partitioning the vertex set of $K_{\aleph_0}$ into countably many infinite parts, we see that $\aleph_0 \cdot K_{\aleph_0} \subseteq K_{\aleph_0}$. 
    Also, clearly $G \subseteq K_{\aleph_0}$. Hence, we have $\aleph_0 \cdot G \subseteq \aleph_0 \cdot K_{\aleph_0} \subseteq K_{\aleph_0}$.
\end{proof}

\subsection{Infinitely many vertices dominate \texorpdfstring{$\epsilon$}{\textepsilon}}
\label{sec7.3}

The argument in this case is very similar to that in the previous subsection. We define bounders and extenders just as before. 
We once more assume inductively that for some $n \in \Nbb$, with $r:= \lfloor n/2 \rfloor$, we have objects given by (1)-(4) as in the last section, and which in addition satisfy
\begin{itemize}
    \item[(5)] $\mathcal{F}_{r}$ agrees about $V(S_{r})$.
    \item[(6)] For any $t \in V_{\epsilon}(S_{r})$ the vertex $Q_i^n(t)$ dominates $\epsilon$.
\end{itemize}

The base case is again trivial, so suppose that our construction has proceeded to step $n \geq 0$. 
The construction is split into two parts just as before, where the case $n=2k$, in which we need to refine our $T$-tribe and find a new copy $Q^{n+1}_{k+1}$ of $S^{\neg \epsilon}_{k}$, proceeds just as in the last section.

If $n = 2k-1$ is odd, and if $f \in \partial_{\neg \epsilon} (S_{k-1})$ or $\partial_{\epsilon}(S_{k-1})$, then we proceed as in the last subsection. But these are no longer the only possibilities. 
It follows from the definition of $S^{\neg \epsilon}_k$ that there is one more option, namely that $f^- \in V_{\epsilon}(S_{k})$. In this case we modify the steps of the construction as follows:

We first apply Lemma~\ref{lem_increasingFnegEpsilon} to $\Fcal_{k-1}$ in order to find a thick $T$-tribe $\Fcal_{k-1}$ which agrees about $\partial (S_k)$ and $V(S_k)$.

Then, by applying Lemma \ref{raystopaths} to tails of the rays $E_{e,i}^n$ in $C_{\Gamma}(X_n, \epsilon)$, we obtain a family $\Pcal_{n+1}$ of paths $P_{e,i}^{n+1}$ which are disjoint from each other and from the $Q_i^n$ except at their initial vertices, where the initial vertex of $P_{e,i}^{n+1}$ is $Q^n_i(e^-)$ and the final vertex $y_{e,i}^{n+1}$ of $P_{e,i}^{n+1}$ dominates $\epsilon$.

Since $\Fcal_{k}$ is concentrated at $\epsilon$, we may pick a collection $\{H_1,\ldots,H_k\}$ of disjoint $T$-minors from $\mathcal{F}_{k}$ all contained in $C(X_n \cup \bigcup \Pcal_{n+1},\epsilon)$. 

Now for each $i \in [k]$, define  
\[
    \hat{Q}^{n+1}_i = Q^n_i \cup H_i(f^-) \cup H_i(S^{\neg \epsilon}_{k} \setminus S^{\neg \epsilon}_{k-1}).
\]
These are almost $T$-suitable subdivisions of $S^{\neg \epsilon}_{k}$, except we need to add a path between $Q^n_i(f^-)$ and $H_i(f^-)$.


By applying Lemma \ref{raystopaths} to tails of the rays $H_i(R_e)$ inside $C(X_n \cup \bigcup \Pcal_{n+1},\epsilon)$ with $e \in \partial_{\epsilon} (S_{k+1}) \setminus \partial (S_k)$ we can construct a family $\Pcal'_{n+1} := \{P_{e,i}^{n+1} \colon e \in \partial_{\epsilon} (S_{k+1}) \setminus \partial_{\epsilon} (S_k), i \leq k\}$ of paths which are disjoint from each other and from the $\hat{Q}_i^{n+1}$ except at their initial vertices, where the initial vertex of $P_{e,i}^{n+1}$ is $H_i(e^-)$ and the final vertex $y_{e,i}^{n+1}$ of $P_{e,i}^{n+1}$ dominates $\epsilon$. 
Therefore the family 
\[
    \Pcal_{n+1} \cup \Pcal'_{n+1} = ( P_{e,i}^{n+1} \colon e \in \partial_{\epsilon} (S_{k+1}), i \in [k])
\]
is a family of disjoint paths, which are also disjoint from the $\hat{Q}_i^{n+1}$ except at their initial vertices, where the initial vertex of $P_{e,i}^{n+1}$ is $H_i(e^-)$ or $Q^n_i(e^-)$ and the final vertex $y_{e,i}^{n+1}$ of $P_{e,i}^{n+1}$ dominates $\epsilon$.

Since $Q_i^n(f^-)$ and $H_i(f^-)$ both dominate $\epsilon$ for all $i$, we may recursively build a sequence $\hat{\mathcal P}_{n+1} = \{ \hat{P}_i \colon 1 \leq i \leq k\}$ of disjoint paths $\hat{P}_i$ from $Q_i^n(f^-)$ to $H_i(f^-)$ with all internal vertices in $C( X_{n+1} \cup \left(\bigcup {\mathcal P}'_{n+1} \cup \bigcup \Pcal_{n+1}\right),\epsilon)$. 
Letting $Q^{n+1}_i = \hat{Q}_i^{n+1} \cup \hat{P}_i$, we see that each $Q^{n+1}_i$ is a $T$-suitable subdivision of $S^{\neg \epsilon}_{k}$ in $\Gamma$.

Our new bounder will be $X_{n+1}:=X_{n} \cup \bigcup \hat{\mathcal P}_{n+1} \cup \bigcup \Pcal'_{n+1} \cup \bigcup \Pcal_{n+1}$.

Finally, let us apply Lemma \ref{pathstorays} to $Y := \{y_{e,i}^{n+1} \colon e \in \partial_{\epsilon} (S_{n+1}), i \leq k\}$ in $\Gamma[Y \cup C( X_{n+1}, \epsilon) ]$. 
This gives us a family of disjoint rays
\[
    \hat{\mathcal{E}}_{n+1} = ( \hat{E}^{n+1}_{e,i} \colon e \in \partial_{\epsilon} (S_{k+1}), i \in [k] )
\]
such that $\hat{E}^{n+1}_{e,i}$ has initial vertex $y_{e,i}^{n+1}$. Let us define our new extender $\mathcal{E}_{n+1}$ given by
\begin{itemize}
    \item $E^{n+1}_{e,i} = Q^n_i(e^-) P^{n+1}_{e,i} y_{e,i}^{n+1} \hat{E}^{n+1}_{e,i}$ if  $e \in \partial_{\epsilon} (S_k), i \in [k]$;
    \item $E^{n+1}_{e,i} = H_i(e^-) P^{n+1}_{e,i} y_{e,i}^{n+1} \hat{E}^{n+1}_{e,i}$ if  $e \in \partial_{\epsilon} (S_{k+1}) \setminus \partial (S_k), i \in [k]$.
\end{itemize}

This concludes the proof of Theorem~\ref{t:countembed}. \hfill \qed

\section{The induction argument}
\label{sec:Induction}

We consider $T$ as a rooted tree with root $r$. 
In Section~\ref{sec:countable-subtrees} we constructed an $S$-horde for any countable subtree $S$ of $T$. 
In this section we will extend an $S$-horde for some specific countable subtree $S$ to a $T$-horde, completing the proof of Theorem~\ref{t:tree}.

Recall that for a vertex $t$ of $T$ and an infinite cardinal $\kappa$ we say that a child $t'$ of $t$ is $\kappa$-embeddable if there are at least $\kappa$ children $t''$ of $t$ such that $T_{t'}$ is a (rooted) topological minor of $T_{t''}$ (Definition~\ref{def_kappachildren}). 
By Corollary~\ref{lem_rotatearoundKappa}, the number of children of $t$ which are not $\kappa$-embeddable is less than $\kappa$. 

\begin{definition}[$\kappa$-closure]
    Let $T$ be an infinite tree with root $r$. 
    \begin{itemize}
        \item If $S$ is a subtree of $T$ and $S'$ is a subtree of $S$, then we say that $S'$ is \emph{$\kappa$-closed in $S$} if for any vertex $t$ of $S'$ all children of $t$ in $S$ are either in $S'$ or else are $\kappa$-embeddable.
    \item The \emph{$\kappa$-closure of $S'$ in $S$} is the smallest $\kappa$-closed subtree of $S$ including $S'$.
    \end{itemize}
\end{definition}

\begin{lemma}
    \label{lem_Kappaclosure}
	Let $S'$ be a subtree of $S$. If $\kappa$ is a uncountable regular cardinal and $S'$ has size less than $\kappa$, then the $\kappa$-closure of $S'$ in $S$ also has size less than $\kappa$.
\end{lemma}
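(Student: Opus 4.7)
The natural approach is to build the $\kappa$-closure as a countable increasing union of subtrees, each of cardinality less than $\kappa$, and then use the regularity (and in particular the uncountable cofinality) of $\kappa$ to bound the cardinality of the union.

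Concretely, I would define a chain $S'_0 \subseteq S'_1 \subseteq \cdots$ of subtrees of $S$ by setting $S'_0 := S'$, and letting $S'_{n+1}$ be obtained from $S'_n$ by adjoining, for every vertex $t \in V(S'_n)$, all children of $t$ in $S$ which are not $\kappa$-embeddable. It is immediate by induction on $n$ that each $S'_n$ is a subtree of $S$, since at each stage we only add vertices adjacent to vertices already present.

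The key quantitative step is to show by induction on $n$ that $|S'_n| < \kappa$. The base case is the hypothesis. For the induction step, by Corollary~\ref{lem_rotatearoundKappa} every vertex $t$ of $S$ has strictly fewer than $\kappa$ children in $S$ that fail to be $\kappa$-embeddable. Hence $S'_{n+1}$ is obtained from $S'_n$ by adding at most $\sum_{t \in V(S'_n)} \lambda_t$ vertices, where each $\lambda_t < \kappa$. Since $|S'_n| < \kappa$ and $\kappa$ is regular, this sum of fewer than $\kappa$ cardinals each less than $\kappa$ is itself less than $\kappa$, so $|S'_{n+1}| < \kappa$ as required. This regularity step is the only place where that assumption on $\kappa$ is essential.

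Now set $S'_\omega := \bigcup_{n \in \N} S'_n$. Because $\kappa$ is \emph{uncountable} and regular, $\cf(\kappa) > \aleph_0$, so the countable union of sets of size less than $\kappa$ still has size less than $\kappa$; thus $|S'_\omega| < \kappa$. It remains to check that $S'_\omega$ is $\kappa$-closed in $S$: if $t \in V(S'_\omega)$, then $t \in V(S'_n)$ for some $n$, and by construction every non-$\kappa$-embeddable child of $t$ in $S$ lies in $S'_{n+1} \subseteq S'_\omega$. As $S'_\omega$ is a subtree of $S$ containing $S'$ and is $\kappa$-closed, the $\kappa$-closure of $S'$ in $S$ is contained in $S'_\omega$ and hence also has cardinality strictly less than $\kappa$. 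The main (and really only) subtle point in the argument is the regularity-of-$\kappa$ bookkeeping at the successor step; everything else is bookkeeping about subtrees and a countable union.
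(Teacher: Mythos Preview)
Your proof is correct and follows essentially the same approach as the paper: build the $\kappa$-closure as the countable increasing union $\bigcup_n S'_n$, use Corollary~\ref{lem_rotatearoundKappa} together with regularity of $\kappa$ to bound $|S'_n|<\kappa$ at each successor step, and then invoke $\cf(\kappa)>\aleph_0$ for the countable union. Your write-up is in fact a little more explicit than the paper's, which compresses the successor-step regularity argument into the phrase ``by induction with Corollary~\ref{lem_rotatearoundKappa}''.
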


\begin{proof}
    Let $S'(0) := S'$ and define inductively $S'(n+1)$ to consist of $S'(n)$ together with all non-$\kappa$-embeddable children contained in $S$ for all vertices of $S'(n)$. 
    It is clear that $\bigcup_{n \in \N} S'(n)$ is the $\kappa$-closure of $S'$. 
    If $\kappa_n$ denotes the size of $S'(n)$, then $\kappa_{n} < \kappa$ by induction with Corollary~\ref{lem_rotatearoundKappa}. Therefore, the size of the $\kappa$-closure is bounded by $ \sum_{n \in \N} \kappa_n < \kappa$, since $\kappa$ has uncountable cofinality.
\end{proof}

We will construct the desired $T$-horde via transfinite induction on the cardinals ${\mu \leq \lvert T \rvert}$. 
Our first lemma illustrates the induction step for regular cardinals.

\begin{lemma}\label{1step}
    Let $\kappa$ be an uncountable regular cardinal. Let $S$ be a rooted subtree of $T$ of size at most $\kappa$ and let $S'$ be a $\kappa$-closed rooted subtree of $S$ of size less than $\kappa$. Then any $S'$-horde $(H_i \colon i \in \N)$ can be extended to an $S$-horde.
\end{lemma}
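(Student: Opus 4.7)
The plan is to prove this lemma by combining a direct extension argument that handles the case $|S| < \kappa$ with a continuous filtration argument that bootstraps to the case $|S| = \kappa$.

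For the core case $|S| < \kappa$, I will extend the members $H_0, H_1, \ldots$ one at a time, producing $H'_0, H'_1, \ldots$ in turn. When it is the turn of $H_i$, the set of vertices already committed to other members,
\[
  A_i := \bigcup_{j < i} H'_j \cup \bigcup_{j \ge i} H_j,
\]
has size at most $\aleph_0 \cdot |S| < \kappa$. For each vertex $t \in V(S')$ with a child $t' \in N^+(t) \cap (V(S) \setminus V(S'))$, the $\kappa$-closedness of $S'$ in $S$ forces $t'$ to be $\kappa$-embeddable in the sense of Definition~\ref{def_kappachildren}, so there are $\kappa$ children $t''$ of $t$ in $T$ with $T_{t'} \leq_r T_{t''}$. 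Since $H_i$ is $T$-suitable, there is a subdivision $G^i_t$ of $T_t$ in $\Gamma$ rooted at $H_i(t)$; the branches $G^i_t(T_{t''})$ for distinct $t'' \in N^+(t)$ are pairwise internally disjoint, so among the $\kappa$ good candidates $t''$ for $t'$, at most $|A_i|<\kappa$ of the associated branches can meet $A_i$. I may therefore greedily pick a $t''$ whose branch $G^i_t(T_{t''})$ avoids $A_i$ (and avoids the branches picked for earlier boundary pairs at the same $t$), and then embed a rooted subdivision of $S \cap T_{t'}$ there using the rooted subdivision $T_{t'} \hookrightarrow T_{t''}$ composed with $G^i_t$. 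Processing all boundary pairs $(t, t')$ in turn and adding each new attachment to $A_i$ keeps $|A_i| < \kappa$ throughout and yields a $T$-suitable subdivision $H'_i$ of $S$ extending $H_i$, disjoint from every previous commitment. Preservation of $T$-suitability at any newly realised vertex $s \in V(S)\setminus V(S')$ uses that the chosen rooted embedding $H\colon T_{t'} \hookrightarrow T_{t''}$ restricts to a rooted embedding $T_s \hookrightarrow T_{H(s)}$, so that $G^i_t(T_{H(s)})$ is a subdivision of $T_{H(s)}$ rooted at $H'_i(s) = G^i_t(H(s))$, which contains a rooted subdivision of $T_s$.

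For the general case $|S|\leq\kappa$, construct a continuous chain $S' = S_0 \subsetneq S_1 \subsetneq \cdots \subsetneq S_\alpha \subsetneq \cdots$ of rooted subtrees of $S$, each $\kappa$-closed in $S$ and of size strictly less than $\kappa$, with union $S$. This is achieved by enumerating $V(S)$, setting $S_{\alpha+1}$ to be the $\kappa$-closure in $S$ of $S_\alpha$ together with the next vertex (which has size $<\kappa$ by Lemma~\ref{lem_Kappaclosure}), and taking unions at limit stages; the latter stay of size $<\kappa$ by the regularity of $\kappa$ and are $\kappa$-closed in $S$ as increasing unions of $\kappa$-closed subtrees. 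Then I recursively build $S_\alpha$-hordes $(H^\alpha_i \colon i \in \N)$ with $H^0_i := H_i$, applying the base case above at successor stages (valid because $S_\alpha$ is $\kappa$-closed in $S_{\alpha+1}$, as $\kappa$-closedness in $S$ clearly descends to every intermediate subtree), and at a limit $\alpha$ taking $H^\alpha_i := \bigcup_{\beta<\alpha} H^\beta_i$, which is a $T$-suitable subdivision of $S_\alpha$ because $T$-suitability is a per-vertex property and the extensions are nested. Finally, $H'_i := \bigcup_{\alpha} H^\alpha_i$ is the desired $S$-horde.

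The main technical obstacle is the disjointness bookkeeping in the base case: the newly attached subtrees must be pairwise disjoint across the countably many members $H_i$ and across the (up to $<\kappa$) many boundary pairs, while simultaneously preserving $T$-suitability at every new vertex. This is precisely where the sharp mismatch between the $\kappa$-sized reservoir of candidate children $t''$ supplied by $\kappa$-embeddability and the strictly smaller forbidden set $A_i$ is crucial: at every step $A_i$ rules out fewer than $\kappa$ branches, so there are always $\kappa$ good choices from which to pick greedily.
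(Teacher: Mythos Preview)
Your proof is correct and follows essentially the same approach as the paper: a transfinite filtration of $S$ by $\kappa$-closed rooted subtrees of size $<\kappa$ (using Lemma~\ref{lem_Kappaclosure}), at each successor step invoking $T$-suitability to obtain a subdivision of $T_t$ at the relevant boundary vertex and using $\kappa$-embeddability together with the $<\kappa$-sized forbidden set to select a good branch for each $i$ in turn, and taking unions at limit stages. The only organisational difference is that you first isolate a base case handling arbitrarily many boundary pairs at once and then apply it inside the filtration, whereas the paper runs a single transfinite induction in which each successor step crosses exactly one boundary edge (since $\bar S_{\beta+1}\setminus \bar S_\beta\subseteq T_{s_\beta}$), making your base case slightly more general than is actually needed.
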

\begin{proof}
    Let $(s_{\alpha} \colon \alpha < \kappa)$ be an enumeration of the vertices of $S$ such that the parent of any vertex appears before that vertex in the enumeration, and for any $\alpha$ let $S_{\alpha}$ be the subtree of $T$ with vertex set $V(S') \cup \{s_{\beta} \colon \beta < \alpha\}$. Let $\bar S_{\alpha}$ denote the $\kappa$-closure of $S_{\alpha}$ in $S$, and observe that $| \bar S_{\alpha}| < \kappa$ by Lemma~\ref{lem_Kappaclosure}.

    We will recursively construct for each $\alpha$ an $\bar S_{\alpha}$-horde $(H^{\alpha}_i \colon i \in \N)$ in $\Gamma$, where each of these hordes extends all the previous ones. 
    For $\alpha = 0$ we let $H^0_i = H_i$ for each $i \in \mathbb{N}$. 
    For any limit ordinal $\lambda$ we have $\bar S_{\lambda} = \bigcup_{\beta < \lambda} \bar S_{\beta}$, and so we can take $H^{\lambda}_i = \bigcup_{\beta < \lambda}H^{\beta}_i$ for each $i \in \mathbb{N}$.

    For any successor ordinal $\alpha = \beta + 1$, if $s_{\beta} \in \bar S_{\beta}$, then $\bar S_{\alpha} = \bar S_{\beta}$, and so we can take $H^{\alpha}_i = H^{\beta}_i$ for each $i \in \mathbb{N}$. 
    Otherwise, $\bar S_{\alpha}$ is the $\kappa$-closure of $\bar S_{\beta} + s_{\beta}$, and so $\bar S_{\alpha} - \bar S_{\beta}$ is a subtree of $T_{s_{\beta}}$. 
    Furthermore, since $s_{\beta}$ is not contained in $\bar S_{\beta}$, it must be $\kappa$-embeddable. 

    Let $s$ be the parent of $s_{\beta}$. 
    By suitability of the $H^{\beta}_i$, we can find for each $i \in \mathbb{N}$ some subdivision $\hat H_i$ of $T_s$ with $\hat H_i(s) = H^{\beta}_i(s)$. 
    We now build the $H^{\alpha}_i$ recursively in $i$ as follows: 

    Let $t_i$ be a child of $s$ such that $T_{t_i}$ has a rooted subdivision $K$ of $T_{s_{\beta}}$, and such that ${\hat H_i(T_{t_i} + s) - \hat H_i(s)}$ is disjoint from all $H^{\alpha}_j$ with $j < i$ and from all $H^{\beta}_j$. Since there are $\kappa$ disjoint  possibilities for $K$, and all $H^{\alpha}_j$ with $j < i$ and all $H^{\beta}_j$ cover less than $\kappa$ vertices in $\Gamma$, such a choice of $K$ is always possible.
    Then let $H^{\alpha}_i$ be the union of $H^{\beta}_i$ with $\hat H_i(K(\bar S_{\alpha} - \bar S_{\beta}) + st_i)$.

    This completes the construction of the $(H^{\alpha}_i \colon i \in \N)$. 
    Obviously, each $H^{\alpha}_i$ for $i \in \N$ is a subdivision of $\bar S_\alpha$ with $H^{\alpha}_i(\bar S_\gamma) = H^{\gamma}_i$ for all $\gamma < \alpha$, and all of them are pairwise disjoint for $i \neq j \in \N$.
    Moreover, $H^{\alpha}_i$ is $T$-suitable since for all vertices~$H^{\alpha}_i(t)$ whose $t$-suitability is not witnessed in previous construction steps, their suitability is witnessed now by the corresponding subtree of~$\hat H_i$.
    Hence $(\bigcup_{\alpha < \kappa} H^{\alpha}_i \colon i \in \N)$ is the desired $S$-horde extending $(H_i \colon i \in \N)$.
\end{proof}

Our final lemma will deal with the induction step for singular cardinals. 
The crucial ingredient will be to represent a tree $S$ of singular cardinality $\mu$ as a continuous increasing union of ${<}\mu$-sized subtrees $(S_\rho \colon \rho < \cf(\mu) )$ where each $S_\rho$ is $|S_\rho|^+$-closed in $S$. 
This type of argument is based on Shelah's singular compactness theorem, see e.g.\ \cite{shelah1975compactness}, but can be read without knowledge of the  paper.

\begin{definition}[$S$-representation]
    For a tree $S$ with $|S|=\mu$, we call a sequence $\mathcal{S}=(S_\rho \colon \rho < \cf(\mu) )$ of subtrees of $S$ with $|S_\rho| = \mu_\rho$ an \emph{$S$-representation} if
    \begin{itemize}
        \item $(\mu_{\rho} \colon \rho < \cf(\mu))$ is a strictly increasing continuous sequence of cardinals less than $\mu$ which is cofinal for $\mu$, 
        \item $S_\rho \subseteq S_{\rho'}$ for all $\rho < \rho'$, i.e.\ $\mathcal{S}$ is \emph{increasing},
        \item for every limit $\lambda < \cf(\mu)$ we have $\bigcup_{\rho < \lambda} S_\rho = S_\lambda$, i.e.\ $\mathcal{S}$ is \emph{continuous},
        \item $\bigcup_{\rho < \cf(\mu)} S_\rho = S$, i.e.\ $\mathcal{S}$ is \emph{exhausting},
        \item $S_\rho$ is $\mu_\rho^+$-closed in $S$ for all $\rho< \cf(\mu)$, where $\mu_\rho^+$ is the successor cardinal of $\mu_{\rho}$. 
    \end{itemize}
    Moreover, for a tree $S' \subseteq S$ we say that $\mathcal{S}$ is an \emph{$S$-representation extending $S'$} if additionally
    \begin{itemize}
        \item $S' \subseteq S_\rho$ for all $\rho < \cf(\mu)$.
    \end{itemize}
\end{definition}

\begin{lemma}
    \label{lem_nicerep}
    For every tree $S$ of singular cardinality and every subtree $S'$ of $S$ with $|S'|< |S|$ there is an $S$-representation extending $S'$.
\end{lemma}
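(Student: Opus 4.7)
The plan is to build the $S_\rho$ by transfinite recursion on $\rho<\cf(\mu)$, using Lemma~\ref{lem_Kappaclosure} to control the sizes. Write $\mu:=|S|$, $\kappa:=\cf(\mu)$, and fix a strictly increasing continuous sequence $(\mu_\rho:\rho<\kappa)$ of cardinals cofinal in $\mu$ with $\mu_0\geq\max(|S'|,\kappa,\aleph_0)$; such a sequence exists as $|S'|<\mu$ and $\kappa<\mu$. Fix an enumeration $V(S)=\{v_\alpha:\alpha<\mu\}$.

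The naive construction — taking $S_\rho$ to be the $\mu_\rho^+$-closure in $S$ of the subtree spanned by $S'\cup\{v_\alpha:\alpha<\mu_\rho\}$ — delivers every axiom of an $S$-representation except continuity at limit stages, which I expect to be the main obstacle. The trouble is that at a limit $\lambda<\kappa$, a child $t'$ of some $t\in\bigcup_{\rho<\lambda}S_\rho$ might satisfy $\kappa(t,t'):=|\{t''\in N^+(t):T_{t'}\leq_r T_{t''}\}|=\mu_\lambda$ exactly. Such a $t'$ is $\mu_\rho^+$-embeddable for every $\rho<\lambda$ (since $\mu_\lambda\geq\mu_\rho^+$) but not $\mu_\lambda^+$-embeddable, so the $\mu_\lambda^+$-closedness of $S_\lambda$ demands $t'$ while the $\mu_\rho^+$-closedness of each individual $S_\rho$ does not force $t'$ into the union.

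To handle this, I would enrich the recursion with pre-inclusions. For each $t\in V(S)$ and each limit $\lambda<\kappa$, the set $C_{t,\lambda}:=\{t'\in N^+(t):\kappa(t,t')=\mu_\lambda\}$ has size at most $\mu_\lambda$ by Corollary~\ref{lem_rotatearoundKappa} (applied with $\kappa=\mu_\lambda^+$), so I fix an enumeration $C_{t,\lambda}=\{c^{t,\lambda}_\alpha:\alpha<|C_{t,\lambda}|\}$. At stage $\rho$, construct $S_\rho$ by an internal $\omega$-recursion: let $B^0_\rho$ be the subtree of $S$ spanned by $S'\cup\{v_\alpha:\alpha<\mu_\rho\}$, and define $B^{n+1}_\rho$ to be the $\mu_\rho^+$-closure in $S$ of the subtree spanned by
\[
B^n_\rho\cup\bigcup_{t\in B^n_\rho,\ \lambda<\kappa\text{ limit}}\{c^{t,\lambda}_\alpha:\alpha<\mu_\rho\};
\]
finally set $S_\rho:=\bigcup_{n\in\N}B^n_\rho$. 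Since every vertex of the graph-theoretic tree $S$ has only finitely many ancestors, the subtree-spanning operation does not increase cardinality; combined with Lemma~\ref{lem_Kappaclosure} and $\kappa<\mu_\rho$, an induction on $n$ gives $|B^n_\rho|\leq\mu_\rho$, whence $|S_\rho|=\mu_\rho$, and $S_\rho$ is $\mu_\rho^+$-closed by construction.

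Verifying the remaining axioms is then routine. Monotonicity $S_\rho\subseteq S_{\rho'}$ for $\rho<\rho'$ follows by induction on $n$, using that every $\mu_{\rho'}^+$-closed subtree is $\mu_\rho^+$-closed; exhaustiveness follows since $\mu=\sup_{\rho<\kappa}\mu_\rho$; and $S'\subseteq S_\rho$ is built in. For continuity at a limit $\lambda<\kappa$, I would show that $\bigcup_{\rho<\lambda}S_\rho$ is already $\mu_\lambda^+$-closed: given $t\in S_{\rho_0}$ with a child $t'$ satisfying $\kappa(t,t')\leq\mu_\lambda$, either $\kappa(t,t')<\mu_\lambda$, in which case $\kappa(t,t')\leq\mu_\rho$ for some $\rho\in[\rho_0,\lambda)$ and $\mu_\rho^+$-closedness of $S_\rho$ gives $t'\in S_\rho$; or $\kappa(t,t')=\mu_\lambda$, in which case $t'=c^{t,\lambda}_\alpha$ for some $\alpha<\mu_\lambda$ and the pre-inclusions at any stage $\rho\in[\rho_0,\lambda)$ with $\mu_\rho>\alpha$ place $t'$ into $B^1_\rho\subseteq S_\rho$. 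Combined with the inclusion $\bigcup_{\rho<\lambda}S_\rho\subseteq S_\lambda$, which is immediate from monotonicity, this yields $S_\lambda=\bigcup_{\rho<\lambda}S_\rho$ and completes the argument.
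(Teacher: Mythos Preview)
Your approach is correct and takes a genuinely different route from the paper's. The paper follows the classical Shelah singular compactness template: it builds, by outer recursion on $i\in\N$ and inner recursion on $\rho<\cf(\mu)$, subtrees $S^i_\rho$ each equipped with a fixed enumeration $(s^i_{\rho,\alpha}:\alpha<\mu_\rho)$, where the decisive clause is that $S^{i+1}_\rho$ absorbs the first $\mu_\rho$ vertices of \emph{every} $S^i_{\rho'}$ with $\rho'\geq\rho$. Setting $S_\rho:=\bigcup_i S^i_\rho$, continuity at a limit $\lambda$ then follows by a diagonal chase: any $s\in S^i_\lambda$ has index $\alpha<\mu_\lambda=\sup_{\sigma<\lambda}\mu_\sigma$, so $\alpha<\mu_\sigma$ for some $\sigma<\lambda$, whence $s\in S^{i+1}_\sigma$. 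Your construction instead pinpoints the sole obstruction to continuity of the naive closure recursion---children $t'$ with $\kappa(t,t')=\mu_\lambda$ exactly, which are $\mu_\rho^+$-embeddable for every $\rho<\lambda$ yet not $\mu_\lambda^+$-embeddable---and neutralises it by pre-including these (at most $\mu_\lambda$ many) vertices at earlier stages via fixed enumerations of the sets $C_{t,\lambda}$. The paper's interleaving is agnostic about the notion of closure and ports directly to other singular-compactness settings; your fix is more problem-specific but makes it completely transparent why continuity was ever in danger.

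Two small points of exposition. In your continuity argument, showing that $U:=\bigcup_{\rho<\lambda}S_\rho$ is $\mu_\lambda^+$-closed and contained in $S_\lambda$ does not by itself yield equality; you also need the (routine) observation that $U$, being a subtree, contains $B^0_\lambda$ and is closed under your $c^{t,\lambda'}_\alpha$-additions and under $\mu_\lambda^+$-closure, hence contains each $B^n_\lambda$ and so $S_\lambda$. And in Case~2, since $t$ may only enter $S_\rho$ at some stage $B^n_\rho$, the pre-inclusion places $t'$ in $B^{n+1}_\rho$ rather than necessarily $B^1_\rho$.
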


\begin{proof}
    Let $|S|=\mu$ be singular, and let $|S'|=\kappa$. Let $(s_{\alpha} \colon \alpha < \mu)$ be an enumeration of the vertices of~$S$. 
    Let $\gamma$ be the cofinality of $\mu$ and let $(\mu_{\rho} \colon \rho < \gamma)$ be a strictly increasing continuous cofinal sequence of cardinals less than $\mu$ with $\mu_0 > \gamma$ and $\mu_0 > \kappa$. 
    By recursion on $i$ we choose for each $i \in \Nbb$ a sequence $(S^i_{\rho} \colon \rho < \gamma)$ of subtrees of $S$ of cardinality $\mu_\rho$, where the vertices of each $S^i_{\rho}$ are enumerated as $(s^i_{\rho, \alpha} \colon \alpha < \mu_{\rho})$, such that:
    
    \begin{enumerate}
        \item\label{nice2} $S^i_{\rho}$ is $\mu_{\rho}^+$-closed.
        \item\label{nice3} $S'$ is a subtree of $S^i_{\rho}$.
        \item\label{nice4} $S^i_{\rho'}$ is a subtree of $S^i_{\rho}$ for $\rho' < \rho$. 
        \item\label{nice5} $s_{\alpha} \in S^{i}_{\rho}$ for $\alpha < \mu_{\rho}$. 
        \item\label{nice6} $s^j_{\rho', \alpha} \in S^i_{\rho}$ for any $j < i$, $\rho \leq \rho' < \gamma$ and $\alpha < \mu_{\rho}$
    \end{enumerate}
    
    This is achieved by recursion on $\rho$ as follows: 
    For any given $\rho < \gamma$, let $X^i_{\rho}$ be the set of all vertices which are forced to lie in $S^i_{\rho}$ by conditions \ref{nice3}--\ref{nice6}, that is, all vertices of $S'$ or of $S^i_{\rho'}$ with $\rho' < \rho$, all $s_\beta$ with $\beta < \mu_\rho$ and all $s^j_{\rho', \alpha}$ with $j < i$, $\rho \leq \rho' < \gamma$ and $\alpha < \mu_{\rho}$. 
    Then $X^i_{\rho}$ has cardinality $\mu_{\rho}$ and so it is included in a subtree of $S$ of cardinality~$\mu_{\rho}$. 
    We take $S^i_{\rho}$ to be the $\mu_{\rho}^+$-closure of this subtree in $S$. Note that, since $\mu_{\rho}^+$ is regular, it follows from Lemma~\ref{lem_Kappaclosure} that $S^i_{\rho}$ has cardinality $\mu_{\rho}$.
    
    For each $\rho < \gamma$, let $S_{\rho} := \bigcup_{i \in \Nbb} S_{\rho}^i$. 
    Then each $S_{\rho}$ is a union of $\mu_{\rho}^+$-closed trees and so is $\mu_{\rho}^+$-closed itself. 
    Furthermore, each $S_\rho$ clearly has cardinality $\mu_{\rho}$. 
    
    It follows from \ref{nice5} that $S = \bigcup_{\rho < \gamma} S_\rho$. Thus, it remains to argue that our sequence is indeed continuous, i.e.\ that for any limit ordinal $\lambda < \gamma$ we have $S_{\lambda} = \bigcup_{\rho < \lambda}S_{\rho}$. 
    The inclusion $\bigcup_{\rho < \lambda}S_{\rho} \subseteq S_{\lambda}$ is clear from \ref{nice4}. 
    For the other inclusion, let $s$ be any element of~$S_\lambda$. 
    Then there is some $i \in \Nbb$ with $s \in S^i_{\lambda}$ and so there is some $\alpha < \mu_{\alpha}$ with $s = s^i_{\lambda, \alpha}$. 
    Then by continuity there is some $\sigma < \lambda$ with $\alpha < \mu_{\sigma}$ and so $s \in S^{i+1}_{\sigma} \subseteq S_{\sigma} \subseteq \bigcup_{\rho < \lambda} S_{\rho}$.
\end{proof}

\begin{lemma}\label{allsteps}
    Let $\mu$ be a cardinal. Then for any rooted subtree $S$ of $T$ of size $\mu$ and any uncountable regular cardinal $\kappa \leq \mu$, any $S'$-horde $(H_i \colon i \in \N)$ of a $\kappa$-closed rooted subtree $S'$ of $S$ of size less than $\kappa$ can be extended to an $S$-horde.
\end{lemma}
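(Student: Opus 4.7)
I would prove this by transfinite induction on $\mu = |S|$. The base case $\mu = \kappa$ is handled directly by Lemma~\ref{1step}, so assume $\mu > \kappa$ and the statement for all smaller cardinalities.

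Suppose first that $\mu$ is regular (and hence automatically uncountable). I would form $S^{\ast}$, the $\mu$-closure of $S'$ in $S$, which by Lemma~\ref{lem_Kappaclosure} is $\mu$-closed in $S$ with $|S^{\ast}| < \mu$. To pass from the $S'$-horde to an $S^{\ast}$-horde, I would appeal either to the inductive hypothesis (applied to $S^{\ast}$, the $\kappa$-closed subtree $S'$, and cardinal $\kappa$, when $|S^{\ast}| \geq \kappa$) or, in the remaining case $|S^{\ast}| < \kappa$, to Lemma~\ref{1step} applied with tree $S^{\ast}$, subtree $S'$, and cardinal $\kappa$. A final invocation of Lemma~\ref{1step} with tree $S$, $\mu$-closed subtree $S^{\ast}$, and cardinal $\mu$ then produces the desired $S$-horde.

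Suppose next that $\mu$ is singular, so $\kappa < \mu$ since regular and singular cardinals are distinct. I would apply Lemma~\ref{lem_nicerep} to obtain an $S$-representation $\mathcal{S} = (S_\rho \colon \rho < \cf(\mu))$ extending $S'$, noting that a trivial adjustment to the construction in Lemma~\ref{lem_nicerep} allows me to additionally require $\mu_0 \geq \kappa$, since the cofinal sequence of cardinals may be started arbitrarily high below $\mu$. I would then build $S_\rho$-hordes $(H_i^\rho \colon i \in \N)$ by recursion on $\rho$, each extending all previous ones. At the initial stage, the inductive hypothesis applied to the tree $S_0$ (of size $\mu_0 < \mu$) with the $\kappa$-closed subtree $S'$ and regular cardinal $\kappa \leq \mu_0$ delivers an $S_0$-horde extending the given $S'$-horde. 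At a successor $\rho = \sigma + 1$, since $\mu_\sigma^+$ is uncountable and regular with $\mu_\sigma^+ \leq \mu_\rho$, and $S_\sigma$ is $\mu_\sigma^+$-closed in $S$ and hence in $S_\rho$, the inductive hypothesis applied to the tree $S_\rho$ (of size $\mu_\rho < \mu$) with the $\mu_\sigma^+$-closed subtree $S_\sigma$ extends $(H_i^\sigma)$ to an $S_\rho$-horde. At limits $\lambda$, continuity of the representation gives $S_\lambda = \bigcup_{\sigma < \lambda}S_\sigma$, so taking $H_i^\lambda := \bigcup_{\sigma < \lambda}H_i^\sigma$ yields a $T$-suitable subdivision of $S_\lambda$ with disjointness preserved. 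By exhaustivity of the representation, $H_i := \bigcup_{\rho < \cf(\mu)} H_i^\rho$ is the desired $S$-horde.

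The singular case is the main obstacle. The whole argument hinges on a careful balancing act between the invariants supplied by Lemma~\ref{lem_nicerep} and the hypotheses required by the inductive statement: one must arrange $\mu_0 \geq \kappa$ so that the very first stage of the recursion can invoke the inductive hypothesis at all, and exploit the $\mu_\sigma^+$-closedness built into the representation so that successor stages match the closure condition with a regular cardinal strictly above $|S_\sigma|$. The continuity of the representation trivialises the limit stages, and exhaustivity seals the final union. Once these cardinal-arithmetic checks are lined up, the recursion itself is routine bookkeeping; the genuine combinatorial work has already been done in Theorem~\ref{t:countembed} and Lemma~\ref{1step}.
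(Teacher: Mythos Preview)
Your proposal is correct and follows essentially the same approach as the paper's proof: transfinite induction on $\mu$, handling the regular case via the $\mu$-closure and Lemma~\ref{1step}, and the singular case via an $S$-representation from Lemma~\ref{lem_nicerep} with a recursion along the representation. Your version is in fact slightly more careful than the paper's in two places: you explicitly treat the possibility $|S^\ast| < \kappa$ in the regular step, and you note the needed adjustment $\mu_0 \geq \kappa$ in the singular step so that the initial stage of the recursion can legitimately invoke the induction hypothesis.
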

\begin{proof}
    The proof is by transfinite induction on $\mu$. 
    If $\mu$ is regular, we let $S''$ be the $\mu$-closure of $S'$ in $S$. 
    Thus $S''$ has size less than $\mu$. 
    So by the induction hypothesis $(H_i \colon i \in \N)$ can be extended to an $S''$-horde, which by Lemma \ref{1step} can be further extended to an $S$-horde.

  	So let us assume that $\mu$ is singular, and write $\gamma = \cf(\mu)$. By Lemma~\ref{lem_nicerep}, fix an $S$-representation $\mathcal{S}=(S_\rho \colon \rho < \cf(\mu) )$ extending $S'$ with $|S'|<|S_0|$. 

We now recursively construct for each $\rho < \gamma$ an $S_{\rho}$-horde $(H^{\rho}_i \colon i \in \N)$, where each of these hordes extends all the previous ones and $(H_i \colon i \in \N)$. 
   Using that each $S_\rho$ is $\mu_\rho^+$-closed in $S$, we can find $(H^0_i \colon i \in \N)$ by the induction hypothesis, and if $\rho$ is a successor ordinal we can find $(H^{\rho}_i \colon i \in \N)$ by again using the induction hypothesis. 
    For any limit ordinal $\lambda$ we set $H^{\lambda}_i = \bigcup_{\rho < \lambda} H^{\rho}_i$ for each $i \in \mathbb{N}$, which yields an $S_\lambda$-horde by the continuity of $\mathcal{S}$. 
    
    This completes the construction of the $H^{\rho}_i$. 
    Then $(\bigcup_{\rho < \gamma} H^{\rho}_i \colon i \in \N)$ is an $S$-horde extending $(H_i \colon i \in \N)$.
\end{proof}

Finally, with the right induction start we obtain the following theorem and hence a proof of Theorem~\ref{t:tree}.

\begin{theorem}
    Let $T$ be a tree and $\Gamma$ a graph such that $nT \leq \Gamma$ for every $n \in \Nbb$. 
    Then there is a $T$-horde, and hence $\aleph_0 T \leq \Gamma$.
\end{theorem}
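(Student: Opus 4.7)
The plan is to combine the countable case, Theorem~\ref{t:countembed}, with the transfinite extension principle, Lemma~\ref{allsteps}, bridging them via a suitable countable base horde. I fix a root $r$ of $T$ and set $\mu := |V(T)|$, then split according to whether $\mu$ is countable or not.

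If $\mu \leq \aleph_0$, there is essentially nothing left to do: $T$ itself is a countable subtree of $T$, so Theorem~\ref{t:countembed} applied with $S = T$ directly produces a $T$-horde, i.e.\ a sequence $(H_i \colon i \in \N)$ of pairwise disjoint $T$-suitable subdivisions of $T$ in $\Gamma$. Taking the union $\bigcup_i H_i \subseteq \Gamma$ is a subdivision of $\aleph_0 T$ inside $\Gamma$, so $\aleph_0 T \leq \Gamma$.

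If $\mu$ is uncountable, I set $\kappa := \aleph_1$, which is an uncountable regular cardinal with $\kappa \leq \mu$. Let $S'$ denote the $\aleph_1$-closure of the one-vertex subtree $\{r\}$ in $T$. Since $|\{r\}| < \aleph_1$ and $\aleph_1$ is uncountable regular, Lemma~\ref{lem_Kappaclosure} ensures that $S'$ is still of cardinality strictly less than $\aleph_1$, i.e.\ $S'$ is a countable $\aleph_1$-closed rooted subtree of $T$ containing $r$. Applying Theorem~\ref{t:countembed} to this countable subtree $S'$ yields an $S'$-horde $(H_i \colon i \in \N)$ in $\Gamma$. Now Lemma~\ref{allsteps}, applied with $S = T$, $\mu = |T|$, $\kappa = \aleph_1$ and this choice of $S'$ and $(H_i \colon i \in \N)$, extends the $S'$-horde to a $T$-horde $(H^T_i \colon i \in \N)$. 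As in the countable case, the pairwise disjointness of the subdivisions $H^T_i$ of $T$ inside $\Gamma$ yields $\aleph_0 T \leq \Gamma$.

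The point is that all genuine difficulty, namely the construction of the initial $S'$-horde with its delicate handling of concentration towards an end, and the Shelah-style singular-compactness argument at singular cardinals, has already been absorbed into the upstream results. The only real obstacle at this last step is the purely set-theoretic matter of choosing a base subtree that is simultaneously small enough for the countable case to apply and closed enough for the induction in Lemma~\ref{allsteps} to start cleanly; taking the $\aleph_1$-closure of $\{r\}$ handles this uniformly for every uncountable cardinality of $T$, and Lemma~\ref{lem_Kappaclosure} guarantees this closure is still countable so that Theorem~\ref{t:countembed} is applicable.
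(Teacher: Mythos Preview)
Your proof is correct and follows essentially the same approach as the paper: handle the countable case directly via Theorem~\ref{t:countembed}, and in the uncountable case take $S'$ to be the $\aleph_1$-closure of $\{r\}$, use Lemma~\ref{lem_Kappaclosure} to see that $S'$ is countable, obtain an $S'$-horde from Theorem~\ref{t:countembed}, and extend it to a $T$-horde via Lemma~\ref{allsteps} with $\kappa=\aleph_1$. Your write-up is slightly more explicit about the countable case and the passage from a $T$-horde to $\aleph_0 T \leq \Gamma$, but the argument is the same.
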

\begin{proof}
    By Theorem~\ref{t:countembed}, we may assume that $T$ is uncountable. Let $S'$ be the $\aleph_1$-closure of the root $\{r\}$ in $T$. 
    Then $S'$ is countable by Lemma~\ref{lem_Kappaclosure} and so there is an $S'$-horde in $\Gamma$ by Theorem~\ref{t:countembed}. 
    This can be extended to a $T$-horde in $\Gamma$ by Lemma \ref{allsteps} with $\mu = |T|$.
\end{proof}

\bibliographystyle{plain}
\bibliography{ubiq}

\end{document}